\def\zl{\underline{z}}
\def\zu{\overline{z}}
\def\tw{\tilde{w}}
\def\tz{\tilde{z}}
\def\kp{\tp[k]}
\def\tp{^{t-1}}
\def\kp{_{k-1}}
\title{Mirror-prox sliding methods for solving a class of monotone variational inequalities}
\author{
	Guanghui Lan
	\thanks{H. Milton Stewart School of Industrial and Systems Engineering, Georgia Institute of Technology ({\tt george.lan@isye.gatech.edu}).}
	\and
	Yuyuan Ouyang
	\thanks{School of Mathematical and Statistical Sciences, Clemson University ({\tt yuyuano@clemson.edu}).} 
}
\begin{document}
	
	\maketitle
	
	\begin{abstract}
		In this paper we propose new algorithms for solving a class of structured monotone variational inequality (VI) problems over compact feasible sets. By identifying the gradient components existing in the operator of VI,
		we show that it is possible to skip computations of the gradients from time to time, while still maintaining the optimal iteration complexity for solving these VI problems. Specifically, for deterministic VI problems involving the sum of the gradient of a smooth convex function $\nabla G$ and a monotone operator $H$, we propose a new algorithm, called the mirror-prox sliding method, which is able to compute an $\varepsilon$-approximate weak solution with at most $\cO((L/\varepsilon)^{1/2})$ evaluations of $\nabla G$ and $\cO((L/\varepsilon)^{1/2}+M/\varepsilon)$ evaluations of $H$, where $L$ and $M$ are Lipschitz constants of $\nabla G$ and $H$, respectively. Moreover, for the case when the operator $H$ can only be accessed through its stochastic estimators,
		we propose a stochastic mirror-prox sliding method that can compute a stochastic $\varepsilon$-approximate weak solution with at most $\cO((L/\varepsilon)^{1/2})$ evaluations of $\nabla G$ and $\cO((L/\varepsilon)^{1/2}+M/\varepsilon + \sigma^2/\varepsilon^2)$ samples of $H$, where $\sigma$ is the variance of the stochastic samples of $H$.
	\end{abstract}
	
	\section{Introduction}
	Monotone \emph{variational inequality} (VI) problem has been used to model a broad class of convex optimization, saddle-point (SP) problem, and equilibrium problems. Let $F:\R^n\to\R^n$ be a monotone operator such that (s.t.)
	\begin{align}
		\langle F(u) - F(v), u-v\rangle \ge 0,\ \forall u, v\in\R^n.
	\end{align}
	A VI problem intends to find an $z^*$ in a nonempty closed convex set $Z\subseteq\R^n$ s.t. 
	\begin{align}
		\label{eq:weakVI}
		\langle F(z), z^*-z\rangle \le 0,\ \forall z\in Z.
	\end{align}
	It should be noted that $z^*$ that satisfies \eqref{eq:weakVI} is commonly known as a weak solution. A related notion is a strong VI solution, namely, a solution $z^*\in Z$ s.t.
	\begin{align}
		\label{eq:strongVI}
		\langle F(z^*), z^* - z\rangle\le 0, \forall z\in Z.
	\end{align}
	Note that since $F$ is monotone, a strong solution defined by \eqref{eq:strongVI} is always a weak solution. If in addition $F$ is continuous, then a weak solution is also a strong solution. 
	
	Our problem of interest is a special class of VI problems \eqref{eq:weakVI} in which $Z$ is compact, and
	\begin{align}
		\label{eq:F}
		F(u) = \nabla G(u) + H(u).
	\end{align}
	Here, the operator $F$ is given by the summation of monotone operators $\nabla G$ and $H$, where $\nabla G$ is the gradient of a convex continuously differentiable function.
	We assume that $\nabla G$ and $H$ are $L$-Lipschitz and $M$-Lipschitz respectively with respect to norm $\|\cdot\|$ s.t.
	\begin{align}
		\label{eq:LM}
		\|\nabla G(w) - \nabla G(v)\|_*\le L\|w-v\|\text{ and }\|H(w) - H(v)\|_*\le M\|w-v\|,\ \forall w,v\in Z,
	\end{align}
	where $\|\cdot\|_*$ is the dual norm of $\|\cdot\|$. Note that by the convexity of function $G$, the first inequality above implies that
	\begin{align}
		\label{eq:L}
		0\le G(w) - G(v) - \langle \nabla G(v), w-v\rangle \le \frac{L}{2}\|w-v\|^2,\ \forall w,v\in Z.
	\end{align}
	To accommodate the possible nonsmoothness of the VI operators, we will also introduce a slight generalization of \eqref{eq:weakVI}. Specifically, our goal is to find a weak solution $z^*\in Z$ such that
	\begin{align}
		\label{eq:problem}
		\langle F(z), z^* -z \rangle + J(z^*) - J(z)\le 0,\ \forall z\in Z,
	\end{align}
	where $J$ is a relatively simple convex function. 
	Throughout this paper, we will denote problem \eqref{eq:problem} by $VI(Z;G,H,J)$.

	
	VI has been a classical research area in optimization (see, e.g., \cite{facchinei2003finite} for an extensive description on the research area). One example of our problem of interest $VI(Z;G,H,J)$ is on the game theory of two-player games with nonlinear payoff function:
	\begin{align}
		\label{eq:SPP}
		\min_{x\in X}\max_{y\in Y}f(x) + \langle Kx, y\rangle - g(y).
	\end{align}
	Here $f$ and $g$ are convex differentiable functions and $X$ and $Y$ are convex sets. The above convex-concave saddle point problem is equivalent to $VI(Z;G,H,0)$ in which 
	\begin{align}
		Z = X\times Y,\ G(z)=f(x) + g(y),\ \text{ and }H(z) = \begin{pmatrix}
			Kx\\ -K^\top y
		\end{pmatrix}.
	\end{align}
	See, e.g., \cite{nemirovski2004prox,juditsky2011solving,chen2017accelerated} and the references within for the discussion on solving the above variational inequality problem. See also \cite{ouyang2021lower,zhang2021lower} for iteration complexity lower bounds of first-order methods for solving the above problem. 
	During the past few years, the study of VI has attracted much interest in statistics, machine learning and artificial intelligence due to its close relation with Generalized Linear Models (e.g., \cite{juditsky2020statistical}), Generative Adversary Networks (e.g., \cite{lin2018solving}), Reinforcement Learning (e.g., \cite{lan2021policy}) and others. 
	Some classic numerical methods for solving VI include deterministic/stochastic gradient projection (e.g., \cite{facchinei2003finite,nemirovski2009robust}), extragradient/mirror-prox methods (e.g., \cite{korpelevich1976extragradient,nemirovski2004prox,juditsky2011solving,monteiro2010complexity}), proximal-point methods (e.g., \cite{rockafellar1976monotone}), and many others. 
	Much recent research effort on the development of VI algorithms
	has been devoted to their iteration complexity, namely, the number of evaluations of deterministic or stochastic VI operators in order to compute an approximate weak or strong solution. 
	
	A seminal result for solving large-scale VI by 
	Nemirovski \cite{nemirovski2004prox}
	shows that
	to compute an approximate $\varepsilon$-approximate weak solution, the number of monotone operator evaluations performed by a generalized extragradient method, called the mirror-prox method,
	can bounded by $\cO(1/\varepsilon)$, where the constant depends on the Lipschitz constant of the monotone operator and the diameter of the feasible set. 
	Nemirovski's mirror-prox method has inspired many more research efforts on solving large-scale VIs; see, e.g., \cite{korpelevich1976extragradient,rockafellar1976monotone,chen1999homotopy,nesterov1999homogeneous,solodov1999hybrid,solodov2000inexact,nemirovski2004prox,nesterov2007dual,monteiro2010complexity,juditsky2011solving,jiang2008stochastic,nemirovski2009robust,lan2012optimal,juditsky2011solving,yousefian2013regularized,yousefian2014smoothing,koshal2013regularized,kotsalis2020simplea,kotsalis2020simpleb,chen2017accelerated}. It should be noted that the $\cO(1/\varepsilon)$ complexity for solving deterministic monotone variational inequalities is not improvable (see the discussions in Section 5 of \cite{nemirovski2004prox}) under the assumption of a deterministic oracle model that provides information of the monotone operator. For stochastic problems, a stochastic mirror-prox method is proposed in \cite{juditsky2011solving} that exhibits an $\cO(1/\varepsilon^2)$ sample complexity.

	A key feature of Nemirovski's mirror-prox method is that it maintains two sequences, one for computing approximate solutions and one for computing future iterates. In each iteration, there are two associated steps that resemble gradient descents. The ``gradient descent'' step updates approximate solutions, and the ``extragradient'' step updates future iterates. Note that the idea of two associated steps for updating approximate solutions and future iterates is closely related to another seminal work in convex smooth optimization, namely, Nesterov's accelerated gradient method (\cite{nesterov1983method}; see also, e.g., \cite{nesterov2018lectures}). Indeed, one of the simplest versions of Nesterov's accelerated gradient method (see, e.g., Section 2.2, ``Constant Step Scheme I'' in \cite{nesterov2018lectures}) has the feature of two gradient descent steps per iteration, with one gradient step for updating approximate solutions and the other for updating future iterates.
	
	However, although there seems to be a close relation between Nemirovski's mirror-prox method and Nesterov's accelerated gradient method in their principles for the design of  algorithms, 
	there is a gap between the complexities of solving VIs and smooth convex optimization. Specifically, if we consider problem $VI(Z;G,H,J)$ in \eqref{eq:problem}, in the deterministic case it requires $\cO((M+L)/\varepsilon)$ evaluations of both the gradient $\nabla G$ and the monotone operator $H$ for Nemirovski's mirror-prox method in \cite{nemirovski2004prox} to compute an $\varepsilon$-approximate solution. While such complexity of gradient and operator evaluations is later improved to $\cO((L/\varepsilon)^{1/2} + M/\varepsilon)$ in \cite{chen2017accelerated}, it should be noted that the number of gradient evaluations of $\nabla G$ is still in the order $\cO(1/\varepsilon)$ in both \cite{nemirovski2004prox} and \cite{chen2017accelerated}. However, if $H\equiv 0$ and $J\equiv 0$ in $VI(Z;G,H,J)$, it only requires $\cO((L/\varepsilon)^{1/2})$ for Nesterov's accelerated gradient method to compute an approximate solution $\zu$ such that $G(\zu) - G(z)\le \varepsilon$ for all $z\in Z$. Therefore, in terms of the complexity concerning gradient evaluation of $\nabla G$, there still seems to be a significant gap of $\cO(1/\varepsilon)$ versus $\cO(1/\varepsilon^{1/2})$ between Nemirovski's mirror-prox method and Nesterov's accelerated gradient method. To the best of our knowledge, such gap has not yet been closed in the literature. Therefore, the key research question in this paper is the following:
	
	\vgap
	
	\emph{For problem $VI(Z;G,H,J)$ in \eqref{eq:problem}, does there exist deterministic and/or stochastic methods, such that the total number of gradient evaluations of $\nabla G$ is bounded by $\cO(1/\varepsilon^{1/2})$ when computing an $\varepsilon$-approximate solution?}
	
	\vgap
	
	The above research question has not yet been addressed in the existing literature; see Tables \ref{tabComplexityResults} and \ref{tabComplexityResults_S} for a list of state-of-the-art results concerning the gradient evaluations of $\nabla G$. 
	
	\begin{table}[h]
		\centering
		\renewcommand{\arraystretch}{2}
		\caption{\label{tabComplexityResults}Comparison of complexity results under deterministic setting}
		\begin{tabular}{|c|c|c|c|}
			\hline
			Problem class & Bound on $\nabla G$ eval. & Bound on $H$/$\cH$ eval. & Related work 
			\\\hline
			$VI(Z;G,H,0)$ & $\ds\cO\left(\frac{L+M}{\varepsilon}\right)$ & $\ds\cO\left(\frac{L+M}{\varepsilon}\right)$ & \cite{nemirovski2004prox} ~(see also \cite{auslender2005interior,nesterov2007dual}) 
			\\\hline
			$VI(Z;G,H,J)$ & $\ds\cO\left(\frac{L+M}{\varepsilon}\right)$ & $\ds\cO\left(\frac{L+M}{\varepsilon}\right)$ & \cite{monteiro2011complexity}
			\\\hline
			$VI(Z;G,H,J)$ & $\ds\cO\left(\sqrt{\frac{L}{\varepsilon}}+\frac{M}{\varepsilon} \right)$ & $\ds\cO\left(\sqrt{\frac{L}{\varepsilon}}+\frac{M}{\varepsilon} \right)$ & \cite{chen2017accelerated}
			\\\hline
			$VI(Z;G,H,J)$ & $\ds\cO\left(\sqrt{\frac{L}{\varepsilon}}\right)$ & $\ds\cO\left(\sqrt{\frac{L}{\varepsilon}}+\frac{M}{\varepsilon} \right)$ & {\bf this paper}
			\\\hline
		\end{tabular}
		\\
		\ \\
		\caption{\label{tabComplexityResults_S} Comparison of complexity results under stochastic setting}
		\begin{tabular}{|c|c|c|c|}
			\hline
			$VI(Z;G,H,0)$ &  $\ds\cO\left(\frac{L+M}{\varepsilon}+\frac{\sigma^2}{\varepsilon^2}\right)$ & $\ds\cO\left(\frac{L+M}{\varepsilon}+\frac{\sigma^2}{\varepsilon^2}\right)$ & \cite{juditsky2011solving}
			\\\hline
			$VI(Z;G,H,J)$ & $\ds\cO\left(\sqrt{\frac{L}{\varepsilon}}+\frac{M}{\varepsilon} + \frac{\sigma^2}{\varepsilon^2}\right)$ & $\ds\cO\left(\sqrt{\frac{L}{\varepsilon}}+\frac{M}{\varepsilon} + \frac{\sigma^2}{\varepsilon^2}\right)$ & \cite{chen2017accelerated}
			\\\hline
			$VI(Z;G,H,J)$ & $\ds\cO\left(\sqrt{\frac{L}{\varepsilon}}\right)$ & $\ds\cO\left(\sqrt{\frac{L}{\varepsilon}}+\frac{M}{\varepsilon} + \frac{\sigma^2}{\varepsilon^2}\right)$ & {\bf this paper}
			\\\hline
		\end{tabular}
	\end{table}
	
	In this paper, we provide a positive answer to the above research question. Specifically, we make the following two contributions in this paper. First, for the deterministic case of $VI(Z;G,H,J)$, we propose a novel algorithm, namely the mirror-prox sliding (MPS) algorithm, that is able to compute an $\varepsilon$-approximate weak solution with at most $\cO((L/\varepsilon)^{1/2})$ gradient evaluations of $\nabla G$. Such gradient complexity matches the one of Nesterov's accelerated gradient method for convex smooth optimization. It should be noted that our complexity result does not violate the lower complexity of VIs described in Section 5 of \cite{nemirovski2004prox}; indeed, the total number of monotone operator evaluations of $H$ is bounded by $\cO((L/\varepsilon)^{1/2} + M/\varepsilon)$. As a consequence, we are now able to skip gradient computations of $\nabla G$ from time to time to obtain better gradient complexity, while still maintaining the optimal iteration complexity for solving VI problems.

	Second, for stochastic case of $VI(Z;G,H,J)$ in which the monotone operator $H$ can only be accessed through its stochastic sample $\cH$, we propose a stochastic mirror-prox sliding algorithm that is able to compute an stochastic $\varepsilon$-approximate weak solution with still at most $\cO((L/\varepsilon)^{1/2})$ gradient evaluations of $\nabla G$. The total number of stochastic evaluations of $\cH$ is bounded by $\cO((L/\varepsilon)^{1/2} + M/\varepsilon + \sigma^2/\epsilon^2)$, where $\sigma^2$ is the variance of the stochastic sample operator $\cH$. The comparison of complexity results of this paper with other state-of-the-art results is reported in Tables \ref{tabComplexityResults} and \ref{tabComplexityResults_S}.
	
	It should be noted that our results reveal a separation of complexity based on two oracles concerning the gradient evaluations and VI operator evaluations. In the existing literature concerning iteration complexity theory of first-order methods for solving variational inequalities and saddle point problems, it is usually only assumed that there exists a single oracle that returns all first-order information of inquiry points. See, e.g., previous results in \cite{nemirovski2004prox} for VI \eqref{eq:problem} and in \cite{ouyang2021lower,zhang2021lower} for saddle point problem \eqref{eq:SPP}. However, based on our results, we may assume that there exists a oracle that returns gradient evaluations of $\nabla G$ for any inquiry point, and that there exists one other oracle that returns operator/stochastic operation evaluations of $H$. The complexities for the two oracles should be separated and our proposed methods are able to achieve the optimal complexity for each oracle. Under the separate oracle assumption, we are able to obtain improved upper complexity bound results in terms of the gradient evaluation oracle that are better than the lower complexity bounds in \cite{nemirovski2004prox,ouyang2021lower,zhang2021lower} for variational inequalities and saddle point problems.
	
	\section{The mirror-prox sliding method}
	In this section, we develop a new mirror-prox sliding (MPS) method for solving $VI(Z;G,H,J)$ as shown in Algorithm \ref{alg:MPSVI}, and study its convergence properties. 
	\begin{algorithm}[H]
		\caption{\label{alg:MPSVI}The mirror-prox sliding (MPS) method for solving $VI(Z;G,H,J)$}
		\begin{algorithmic}
			\State Choose $z_0\in Z$ and set $\zu_0=z_0$.
			\For {$k=1,\ldots,N$}
			\State Compute
			\begin{align}
				\label{eq:MPSVI:zl}
				\zl_k & =(1-\gamma_k)\zu_{k-1} + \gamma_k z_{k-1}.
			\end{align} 
			\State Set $z_k^0 = z\kp$. 
			\For {$t=1,\ldots,T_k$}
			\State Compute
			\begin{align}
				\label{eq:MPSVI:tzkt}
				\tz_k^t = & \argmin_{z\in Z}\langle \nabla G(\zl_k) + H(z_k\tp), z\rangle + J(z) + \beta_k V(z\kp, z) + \eta_k^t V(z_k\tp, z),
				\\
				\label{eq:MPSVI:zkt}
				z_k^t = & \argmin_{z\in Z}\langle \nabla G(\zl_k) + H(\tz_k^t), z\rangle + J(z) + \beta_k V(z\kp, z) + \eta_k^t V(z_k\tp, z).
			\end{align}
			\EndFor
			\State Set $z_k = z_k^{T_k}$, $\tz_k = \frac{1}{T_k}\sum_{t=1}^{T_k}\tz_k^t$, and
			\begin{align}
				\label{eq:MPSVI:zuk}
				\zu_k = (1-\gamma_k)\zu\kp + \gamma_k\tz_k.
			\end{align}
			\EndFor 
			\State Output $\zu_N$.
		\end{algorithmic}
	\end{algorithm}
	
	In Algorithm \ref{alg:MPSVI}, the function $V(\cdot,\cdot)$ is called a prox-function associated with set $Z$ and norm $\|\cdot\|$. In particular, for a given strongly convex function $\pi(\cdot)$ with respect to norm $\|\cdot\|$ and strong convexity parameter $1$, 
	we define the prox-function $V$ as 
	\begin{align}
		\label{eq:V}
		V(z,u) = \pi(u) - \pi(z) - \langle \pi(z), u-z\rangle,\ \forall z,u\in Z.
	\end{align}
	The above prox-function is also known as the Bregman divergence \cite{bregman1967relaxation}. Note that by the strong convexity of $\pi(\cdot)$ we have 
	\begin{align}
		\label{eq:Vnorm}
		V(z,u) \ge \frac{1}{2}\|z - u\|^2,\ \forall z, u\in X.
	\end{align}
	One simple example of the prox-function is the Euclidean distance $V(z,u):=(1/2)\|z-u\|_2^2$ associated with norm $\|\cdot\|_2$. 
	
	A few remarks are in place for the proposed MPS method in Algorithm \ref{alg:MPSVI}. First, when $T_k\equiv 1$, the MPS method reduces to a deterministic version of the accelerated mirror-prox method in \cite{chen2017accelerated}. In the case when $T_k\equiv 1$ and $H\equiv 0$, it becomes a version of Nesterov's accelerated gradient method (see, e.g., \cite{nesterov2018lectures}). In the case when $T_k\equiv 1$ and $G\equiv 0$, the MPS method becomes Nemirovski's mirror-prox method in \cite{nemirovski2004prox}. 
	Indeed, the MPS method can be understood as a combination of Nesterov's accelerated gradient method and Nemirovski's mirror-prox method as follows. For iterations $k=1,\ldots,N$, the MPS method computes gradient $\nabla G(\zl_k)$ and runs $T_k$ iterations computing approximate weak solution to a generalized monotone variational inequality subproblem of form 
	$$\langle \nabla G(\zl_k) + H(z), z^*_k - z\rangle + (J(z^*_k) + \beta_k V(x\kp, z^*_k)) - (J(z) + \beta_k V(x\kp, z))\le 0,\ \forall z\in Z.$$
	Second, 
	Within $T_k$ inner iterations of the $k$-th outer iteration of the MPS method, we keep using the gradient value $\nabla G(\zl_k)$ without requesting any more gradient evaluations of $\nabla G$. Equivalently, such design allows us to skip computation of $\nabla G$ from time to time and reduce the complexity associated with gradient evaluations.
	
	In order to evaluate the efficiency of Algorithm \ref{alg:MPSVI}, we define
	\begin{align}
		\label{eq:Q}
		Q(\tz, z):=G(\tz) - G(z) +\langle H(z), \tz - z\rangle + J(\tz) - J(z).
	\end{align}
	Due to the convexity of $G(\cdot)$, if $Q(\tz,z)\le 0$ for all $z\in Z$, then $\tz$ is a weak solution to $VI(Z;G,H)$. Our goal in the remaining part of this section is to estimate the number of gradient and operator evaluations our proposed algorithms require in order to compute $\varepsilon$-approximate weak solution $\zu_N$ such that $\sup_{z\in Z} Q(\zu_N, z)\le \varepsilon$.
	
	\vgap
	
	To analyze the convergence of the proposed MPS method we start with the following lemma concerning the property of $\zl_k$ and $\zu_k$ defined in \eqref{eq:MPSVI:zl} and \eqref{eq:MPSVI:zuk} respectively. 
	
	\vgap
	
	\begin{lemma}
		\label{lem:lin_approx}
		For any $\gamma_k\in [0,1]$, we have
		\begin{align}
			& \left[G(\zu_k) - G(z) + \langle H(z), \zu_k - z\rangle + J(\zu_k) - J(z)\right]
			\\
			& - (1-\gamma_k)\left[G(\zu\kp) - G(z) + \langle H(z), \zu\kp - z\rangle + J(\zu\kp) - J(z)\right]
			\\
			\le & \gamma_k\left[\langle \nabla G(\zl_k) + H(z), \tz_k - z\rangle + J(\tz_k) - J(z) + \frac{L\gamma_k}{2}\|\tz_k - z\kp\|^2\right].
		\end{align}
	\end{lemma}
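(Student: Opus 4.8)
The plan is to decompose the left-hand side into three groups of terms---those involving $G$, those involving $H$, and those involving $J$---and to treat each group separately, exploiting the two averaging identities $\zu_k = (1-\gamma_k)\zu\kp + \gamma_k\tz_k$ from \eqref{eq:MPSVI:zuk} and $\zl_k = (1-\gamma_k)\zu\kp + \gamma_k z\kp$ from \eqref{eq:MPSVI:zl}.

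First I would dispatch the two easy groups. For the $H$-terms, linearity of $\langle H(z),\cdot\rangle$ combined with the decomposition $\zu_k - z = (1-\gamma_k)(\zu\kp - z) + \gamma_k(\tz_k - z)$ gives the exact identity
\[
\langle H(z), \zu_k - z\rangle - (1-\gamma_k)\langle H(z), \zu\kp - z\rangle = \gamma_k\langle H(z), \tz_k - z\rangle.
\]
For the $J$-terms, convexity of $J$ applied to $\zu_k = (1-\gamma_k)\zu\kp + \gamma_k\tz_k$ yields $J(\zu_k) \le (1-\gamma_k)J(\zu\kp) + \gamma_k J(\tz_k)$, so that
\[
\bigl[J(\zu_k) - J(z)\bigr] - (1-\gamma_k)\bigl[J(\zu\kp) - J(z)\bigr] \le \gamma_k\bigl(J(\tz_k) - J(z)\bigr).
\]

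The main work lies in the $G$-group, i.e.\ bounding $G(\zu_k) - (1-\gamma_k)G(\zu\kp) - \gamma_k G(z)$. Here I would invoke \eqref{eq:L} with base point $\zl_k$: use the upper (smoothness) inequality on $G(\zu_k)$ and the lower (convexity) inequality on both $G(\zu\kp)$ and $G(z)$, namely $G(\zu_k) \le G(\zl_k) + \langle \nabla G(\zl_k), \zu_k - \zl_k\rangle + \tfrac{L}{2}\|\zu_k - \zl_k\|^2$ together with $G(\zu\kp) \ge G(\zl_k) + \langle \nabla G(\zl_k), \zu\kp - \zl_k\rangle$ and $G(z) \ge G(\zl_k) + \langle \nabla G(\zl_k), z - \zl_k\rangle$. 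Substituting, the constant $G(\zl_k)$ terms cancel because $1 - (1-\gamma_k) - \gamma_k = 0$, and the gradient terms collapse: their combined coefficient is $\zu_k - (1-\gamma_k)\zu\kp - \gamma_k z$, which by the averaging identity for $\zu_k$ equals $\gamma_k(\tz_k - z)$. Hence the linear part reduces to $\gamma_k\langle \nabla G(\zl_k), \tz_k - z\rangle$.

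Finally I would simplify the quadratic remainder. Subtracting the two averaging identities (which share the same $(1-\gamma_k)\zu\kp$ component) gives $\zu_k - \zl_k = \gamma_k(\tz_k - z\kp)$, so $\tfrac{L}{2}\|\zu_k - \zl_k\|^2 = \tfrac{L\gamma_k^2}{2}\|\tz_k - z\kp\|^2$. Adding the three grouped bounds and factoring out $\gamma_k$ then reproduces exactly the claimed right-hand side. I expect the only delicate point to be the linear telescoping in the $G$-group---keeping the three base-point expansions aligned so the $\nabla G(\zl_k)$ terms combine precisely into $\gamma_k(\tz_k - z)$ with no residual $\zl_k$ contribution; the remaining manipulations are routine substitution.
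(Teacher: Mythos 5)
Your proposal is correct and follows essentially the same route as the paper's proof: the $H$-terms collapse exactly by linearity, the $J$-terms are handled by convexity of $J$ applied to $\zu_k = (1-\gamma_k)\zu\kp + \gamma_k\tz_k$, and the $G$-group is bounded using smoothness at $\zl_k$ for $G(\zu_k)$ together with convexity lower bounds for $G(\zu\kp)$ and $G(z)$, with the gradient terms telescoping to $\gamma_k\langle\nabla G(\zl_k), \tz_k - z\rangle$ and the quadratic term simplified via $\zu_k - \zl_k = \gamma_k(\tz_k - z\kp)$. The only difference is organizational (you separate the three groups up front, while the paper processes them sequentially in one chain of inequalities), which does not change the substance of the argument.
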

	\begin{proof}
		From the definitions of $\zl_k$ and $\zu_k$ in \eqref{eq:MPSVI:zl} and \eqref{eq:MPSVI:zuk}, the convexity of $G$ and $J$, and the property \eqref{eq:L} from the $L$-Lipschitz continuity of $\nabla G$, we have
		\begin{align}
			& \left[G(\zu_k) - G(z) + \langle H(z), \zu_k - z\rangle + J(\zu_k) - J(z)\right]
			\\
			& - (1-\gamma_k)\left[G(\zu\kp) - G(z) + \langle H(z), \zu\kp - z\rangle + J(\zu\kp) - J(z)\right]
			\\
			\le & G(\zu_k) - (1-\gamma_k) G(\zu\kp) - \gamma_k G(z) + \gamma_k \left[\langle H(z), \tz_k - z\rangle + J(\tz_k) - J(z)\right]
			\\
			\le & G(\zl_k) + \langle \nabla G(\zl_k), \zu_k - \zl_k\rangle + \frac{L}{2}\|\zu_k - \zl_k\|^2
			\\
			& - (1-\gamma_k)\left[G(\zl_k) + \langle \nabla G(\zl_k), \zu\kp - \zl_k\rangle\right] - \gamma_k \left[ G(\zl_k) + \langle \nabla G(\zl_k), z - \zl_k\rangle\right] 
			\\
			& + \gamma_k \left[\langle H(z), \tz_k - z\rangle + J(\tz_k) - J(z)\right]
			\\
			= & \gamma_k\left[\langle \nabla G(\zl_k) + H(z), \tz_k - z\rangle + J(\tz_k) - J(z) + \frac{L\gamma_k}{2}\|\tz_k - z\kp\|^2\right].
		\end{align}
	\end{proof}
	
	\vgap
	
	We will also need the following lemma concerning the sequences of $\tz_k^t$ and $z_k^t$ computed in the inner iterations of our proposed method.
	
	\vgap
	
	\begin{lemma}
		\label{lem:inner_est}
		If 
		\begin{align}
			\label{eq:cond_etaM}
			M\le \sqrt{(\beta_k + \eta_k^t)\eta_k^t},\ \forall k\ge 1 \text{ and }t\ge 1
		\end{align}
		and
		\begin{align}
			\label{eq:cond_etabeta}
			\eta_k^t\le \beta_k + \eta_k\tp,\ \forall k\ge 1\text{ and }t\ge 2,
		\end{align}
		then
		\begin{align}
			\label{eq:inner_est}
			\begin{aligned}
				& \langle \nabla G(\zl_k) + H(z), \tz_k - z\rangle + J(\tz_k) - J(z)
				\\
				\le & - \beta_k V(z\kp, \tz_k) + \left(\beta_k + \frac{\eta_k^1}{T_k}  \right) V(z\kp, z) - \frac{\beta_k + \eta_k^{T_k} }{T_k} V(z_k, z), \ \forall z\in Z.
			\end{aligned}
		\end{align}
	\end{lemma}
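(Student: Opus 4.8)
The plan is to derive \eqref{eq:inner_est} by first establishing a per-inner-iteration estimate indexed by $t$, and then averaging it over $t=1,\dots,T_k$. The backbone is the standard three-point property of Bregman proximal maps: if $w=\argmin_{u\in Z}\{\langle g,u\rangle + J(u) + \sum_i \mu_i V(a_i,u)\}$, then for every $u\in Z$ one has $\langle g,\, w-u\rangle + J(w)-J(u) \le \sum_i \mu_i\left(V(a_i,u)-V(a_i,w)-V(w,u)\right)$, which follows from the first-order optimality condition together with the identity $\langle \nabla\pi(w)-\nabla\pi(a_i),\, u-w\rangle = V(a_i,u)-V(a_i,w)-V(w,u)$. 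Applying this to the two prox updates \eqref{eq:MPSVI:tzkt} and \eqref{eq:MPSVI:zkt} gives two inequalities, both carrying the prox centers $z\kp$ (weight $\beta_k$) and $z_k\tp$ (weight $\eta_k^t$): one for $w=\tz_k^t$ with test point $u=z_k^t$, and one for $w=z_k^t$ with a general test point $u=z$.

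Next I would invoke monotonicity of $H$ to pass from the test-point operator value to the extrapolated value, $\langle H(z),\tz_k^t - z\rangle \le \langle H(\tz_k^t),\tz_k^t - z\rangle$, and then perform the usual mirror-prox splitting $\langle \nabla G(\zl_k)+H(\tz_k^t),\,\tz_k^t - z\rangle = \langle \nabla G(\zl_k)+H(z_k\tp),\,\tz_k^t - z_k^t\rangle + \langle \nabla G(\zl_k)+H(\tz_k^t),\, z_k^t - z\rangle + \langle H(\tz_k^t)-H(z_k\tp),\,\tz_k^t - z_k^t\rangle$. The first two inner products (after adding the matching $J$ differences) are controlled by the two three-point inequalities, and upon adding them the terms $\beta_k V(z\kp,z_k^t)$ and $\eta_k^t V(z_k\tp,z_k^t)$ cancel. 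The \textbf{main obstacle} is absorbing the residual $\langle H(\tz_k^t)-H(z_k\tp),\,\tz_k^t - z_k^t\rangle$: I would bound it by Cauchy--Schwarz and the $M$-Lipschitz continuity \eqref{eq:LM} of $H$, then apply Young's inequality with weight $\beta_k+\eta_k^t$ to get $\tfrac{M^2}{2(\beta_k+\eta_k^t)}\|\tz_k^t - z_k\tp\|^2 + \tfrac{\beta_k+\eta_k^t}{2}\|\tz_k^t - z_k^t\|^2$. The negative terms that survived the cancellation, namely $-\eta_k^t V(z_k\tp,\tz_k^t)$ and $-(\beta_k+\eta_k^t)V(\tz_k^t,z_k^t)$, dominate these via \eqref{eq:Vnorm}; it is precisely here that condition \eqref{eq:cond_etaM}, i.e. $M^2\le(\beta_k+\eta_k^t)\eta_k^t$, makes the coefficient of $\|\tz_k^t - z_k\tp\|^2$ nonpositive. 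The output is the clean per-$t$ bound $\langle \nabla G(\zl_k)+H(z),\,\tz_k^t - z\rangle + J(\tz_k^t)-J(z) \le \beta_k V(z\kp,z) - \beta_k V(z\kp,\tz_k^t) - (\beta_k+\eta_k^t)V(z_k^t,z) + \eta_k^t V(z_k\tp,z)$.

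Finally I would sum this per-$t$ inequality over $t=1,\dots,T_k$ and divide by $T_k$. On the left, $\tz_k=\tfrac{1}{T_k}\sum_t\tz_k^t$ together with convexity of $J$ shows that the target left-hand side $\langle \nabla G(\zl_k)+H(z),\,\tz_k - z\rangle + J(\tz_k)-J(z)$ is a lower bound for the average; convexity of $V(z\kp,\cdot)$ in its second argument converts $-\tfrac{\beta_k}{T_k}\sum_t V(z\kp,\tz_k^t)$ into $-\beta_k V(z\kp,\tz_k)$. The terms $-(\beta_k+\eta_k^t)V(z_k^t,z)+\eta_k^t V(z_k\tp,z)$ telescope once I recall $z_k^0=z\kp$ and $z_k=z_k^{T_k}$: the coefficient of each interior $V(z_k^t,z)$ with $1\le t\le T_k-1$ equals $\eta_k^{t+1}-\beta_k-\eta_k^t$, which is nonpositive by \eqref{eq:cond_etabeta}, so these intermediate nonnegative terms may be discarded and only the boundary indices $t=0$ and $t=T_k$ remain. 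What is left is exactly $-\beta_k V(z\kp,\tz_k) + \left(\beta_k+\tfrac{\eta_k^1}{T_k}\right)V(z\kp,z) - \tfrac{\beta_k+\eta_k^{T_k}}{T_k}V(z_k,z)$, establishing \eqref{eq:inner_est}. The only genuinely delicate point, beyond careful bookkeeping, is the choice of the Young weight $\beta_k+\eta_k^t$ in the residual bound, which is what ties the estimate to hypothesis \eqref{eq:cond_etaM}.
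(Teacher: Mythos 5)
Your proposal is correct and follows essentially the same route as the paper's proof: the same two three-point (prox) inequalities — one for $\tz_k^t$ tested at $z_k^t$, one for $z_k^t$ tested at general $z$ — the same use of monotonicity and $M$-Lipschitz continuity of $H$ to control the residual $\langle H(\tz_k^t)-H(z_k\tp),\,\tz_k^t-z_k^t\rangle$ under condition \eqref{eq:cond_etaM}, and the same averaging, Jensen, and telescoping step using \eqref{eq:cond_etabeta} with $z_k^0=z\kp$, $z_k^{T_k}=z_k$. The only cosmetic difference is that you absorb the residual via Young's inequality with weight $\beta_k+\eta_k^t$, while the paper bounds $M\|z_k\tp-\tz_k^t\|\,\|\tz_k^t-z_k^t\|$ directly against the two negative quadratic terms; the two computations are equivalent.
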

	\begin{proof}
		From the optimality conditions of \eqref{eq:MPSVI:tzkt} and \eqref{eq:MPSVI:zkt}, we have
		\begin{align}
			\label{tmp1}
			\begin{aligned}
				& \langle \nabla G(\zl_k) + H(z_k\tp), \tz_k^t - z\rangle + J(\tz_k^t) - J(z)
				\\
				\le & \beta_k\left(V(z\kp, z) - V(z\kp, \tz_k^t) - V(\tz_k^t, z)\right)
				\\
				& + \eta_k^t \left(V(z_k\tp, z) - V(z_k\tp, \tz_k^t) - V(\tz_k^t, z)\right),\ \forall z\in Z
			\end{aligned}		
		\end{align}
		and
		\begin{align}
			\label{tmp2}
			\begin{aligned}
				& \langle \nabla G(\zl_k) + H(\tz_k^t), z_k^t - z\rangle + J(z_k^t) - J(z)
				\\
				\le & \beta_k\left(V(z\kp, z) - V(z\kp, z_k^t) - V(z_k^t, z)\right)
				\\
				& + \eta_k^t \left(V(z_k\tp, z) - V(z_k\tp, z_k^t) - V(z_k^t, z)\right),\ \forall z\in Z
			\end{aligned}
		\end{align}
		respectively.
		In particular, let $z=z_k^t$ in \eqref{tmp1}, the inequality becomes
		\begin{align}
			\label{tmp3}
			\begin{aligned}
				& \langle \nabla G(\zl_k) + H(z_k\tp), \tz_k^t - z_k^t\rangle + J(\tz_k^t) - J(z^k_t)
				\\
				\le & \beta_k\left(V(z\kp, z^k_t) - V(z\kp, \tz_k^t) - V(\tz_k^t, z^k_t)\right)
				\\
				& + \eta_k^t \left(V(z_k\tp, z^k_t) - V(z_k\tp, \tz_k^t) - V(\tz_k^t, z^k_t)\right).
			\end{aligned}		
		\end{align}
		Adding inequalities \eqref{tmp2} and \eqref{tmp3} and recalling the strong convexity of prox-function $V(\cdot,\cdot)$ in \eqref{eq:Vnorm}, we obtain
		\begin{align}
			& \langle G(\zl_k), \tz_k^t - z\rangle + \langle H(\tz_k^t), z_k^t - z\rangle + \langle H(z_k\tp), \tz_k^t - z_k^t\rangle + J(\tz_k^t) - J(z)
			\\
			\le & \beta_k\left(V(z\kp, z) - V(z_k^t, z) - V(z\kp, \tz_k^t) - V(\tz_k^t, z^k_t)\right)
			\\
			& + \eta_k^t \left(V(z_k\tp, z) - V(z_k^t, z)- V(z_k\tp, \tz_k^t) - V(\tz_k^t, z^k_t)\right)
			\\
			\le & \beta_k V(z\kp, z) - \beta_k V(z\kp, \tz_k^t) + \eta_k^t V(z_k\tp, z) - (\beta_k + \eta_k^t) V(z_k^t, z)
			\\
			& - \frac{1}{2}(\beta_k + \eta_k^t) \|\tz_k^t - z_k^t\|^2 - \frac{\eta_k^t}{2} \|z_k\tp - \tz_k^t\|^2
			,\ \forall z\in Z.
		\end{align}
		In the above relation, note that
		\begin{align}
			& \langle H(\tz_k^t), z_k^t - z\rangle + \langle H(z_k\tp), \tz_k^t - z_k^t\rangle
			\\
			= & \langle H(z_k\tp) - H(\tz_k^t), \tz_k^t - z_k^t\rangle + \langle H(\tz_k^t), \tz_k^t - z\rangle
			\\
			\ge & -\|H(z_k\tp) - H(\tz_k^t)\|_*\| \tz_k^t - z_k^t\| + \langle H(z), \tz_k^t - z\rangle
			\\
			\ge & -M\|z_k\tp - \tz_k^t\|_*\| \tz_k^t - z_k^t\| + \langle H(z), \tz_k^t - z\rangle.
		\end{align}
		Here we use the Cauchy-Schwartz inequality, and the monotonicity and $M$-Lipschitz continuity of $H(\cdot)$. Summarizing the above two relations we have
		\begin{align}
			& \langle G(\zl_k) + H(z), \tz_k^t - z\rangle + J(\tz_k^t) - J(z)
			\\
			\le & \beta_k V(z\kp, z) - \beta_k V(z\kp, \tz_k^t) + \eta_k^t V(z_k\tp, z) - (\beta_k + \eta_k^t) V(z_k^t, z)
			\\
			& - \frac{1}{2}(\beta_k + \eta_k^t) \|\tz_k^t - z_k^t\|^2 - \frac{\eta_k^t}{2} \|z_k\tp - \tz_k^t\|^2 + M\|z_k\tp - \tz_k^t\|_*\| \tz_k^t - z_k^t\|
			\\
			\le & \beta_k V(z\kp, z) - \beta_k V(z\kp, \tz_k^t) + \eta_k^t V(z_k\tp, z) - (\beta_k + \eta_k^t) V(z_k^t, z),
		\end{align}
		where the last inequality is from the relation between $M$ and parameters $\beta_k$ and $\eta_k^t$ in \eqref{eq:cond_etaM}. Taking average of the above relation from $t=1,\ldots,T_k$ and recalling the convexity of functions $J(\cdot)$ and $V(z\kp, \cdot)$ and the definition that $\tz_k=(1/T_k)\sum_{t=1}^{T_k}\tz_k^t$, we have
		\begin{align}
			& \langle G(\zl_k) + H(z), \tz_k - z\rangle + J(\tz_k) - J(z)
			\\
			\le & \beta_k V(z\kp, z) - \beta_k V(z\kp, \tz_k) + \frac{1}{T_k}\sum_{t=1}^{T_k}\eta_k^t V(z\kp, z) - (\beta_k + \eta_k^t) V(z_k^t, z).
		\end{align}
		We conclude \eqref{eq:inner_est} immediately by applying the condition of parameters $\beta_k$ and $\eta_k^t$ in \eqref{eq:cond_etabeta} and recalling that $z_k^0 = z\kp$ and $z_k^{T_k}=z_k$.
	\end{proof}
	
	\vgap
	
	With the help of the above lemma, we are now ready to prove the convergence of Algorithm \ref{alg:MPSVI}. The following quantities will be used throughout this paper:
	\begin{align}
		\label{eq:Gamma}
		\Gamma_k = \begin{cases}
			1 & {k=1},
			\\
			(1-\gamma_k)\Gamma\kp & k>1.
		\end{cases}
	\end{align}
	
	\vgap
	
	\begin{theorem}
		\label{thm:MPS}
		Suppose that conditions \eqref{eq:cond_etaM} and \eqref{eq:cond_etabeta} in Lemma \ref{lem:inner_est} hold, and that
		\begin{align}
			\label{eq:cond_conv}
			\gamma_1 = 1, \gamma_k\in [0,1], \beta_k\ge L\gamma_k,\ \forall k\ge 1, \text{ and }\frac{\gamma_k}{\Gamma_k}\left(\beta_k + \frac{\eta_k^1}{T_k}\right) \le \frac{\gamma\kp(\beta\kp + \eta\kp^{T\kp})}{\Gamma\kp T\kp},\ \forall k\ge 2.
		\end{align}
		We have
		\begin{align}
			\label{eq:MPS}
			\begin{aligned}
				Q(\zu_N, z)\le & \Gamma_N\left(\beta_1 + \frac{\eta_1^1}{T_1}\right)V(z_0, z) - \frac{\gamma_N(\beta_N + \eta_N^{T_N})}{T_N}V(z_N, z),\ \forall z\in Z,
			\end{aligned}
		\end{align}
		where $Q(\cdot,\cdot)$ is defined in \eqref{eq:Q}.
	\end{theorem}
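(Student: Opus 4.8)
The plan is to fuse Lemmas \ref{lem:lin_approx} and \ref{lem:inner_est} into a single one-step recursion for the gap functional $Q(\zu_k,z)$ and then telescope across the outer iterations. The first observation is that, by the definition \eqref{eq:Q}, the left-hand side of Lemma \ref{lem:lin_approx} is exactly $Q(\zu_k,z) - (1-\gamma_k)Q(\zu\kp,z)$. Substituting the estimate of Lemma \ref{lem:inner_est} into the bracketed term on the right-hand side of Lemma \ref{lem:lin_approx}, I obtain, for every $z\in Z$,
$$Q(\zu_k,z) - (1-\gamma_k)Q(\zu\kp,z) \le \gamma_k\left[-\beta_k V(z\kp,\tz_k) + \left(\beta_k+\tfrac{\eta_k^1}{T_k}\right)V(z\kp,z) - \tfrac{\beta_k+\eta_k^{T_k}}{T_k}V(z_k,z) + \tfrac{L\gamma_k}{2}\|\tz_k - z\kp\|^2\right].$$

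The next step eliminates the two terms carrying $\tz_k$. Using the strong-convexity bound \eqref{eq:Vnorm} in the form $V(z\kp,\tz_k)\ge \tfrac12\|\tz_k-z\kp\|^2$ together with $\beta_k\ge L\gamma_k$ from \eqref{eq:cond_conv}, the combination $-\gamma_k\beta_k V(z\kp,\tz_k)+\tfrac{L\gamma_k^2}{2}\|\tz_k-z\kp\|^2 \le \tfrac{\gamma_k}{2}(L\gamma_k-\beta_k)\|\tz_k-z\kp\|^2\le 0$. This is the accelerated-gradient cancellation that drives the improved gradient complexity, and it leaves the clean recursion
$$Q(\zu_k,z) - (1-\gamma_k)Q(\zu\kp,z) \le \gamma_k\left[\left(\beta_k+\tfrac{\eta_k^1}{T_k}\right)V(z\kp,z) - \tfrac{\beta_k+\eta_k^{T_k}}{T_k}V(z_k,z)\right].$$

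I would then divide by $\Gamma_k$ and sum. By the definition \eqref{eq:Gamma}, $(1-\gamma_k)/\Gamma_k = 1/\Gamma\kp$ for $k\ge 2$, so the left-hand side becomes $Q(\zu_k,z)/\Gamma_k - Q(\zu\kp,z)/\Gamma\kp$; since $\gamma_1=1$ kills the residual lower term at $k=1$, summing over $k=1,\ldots,N$ telescopes the left-hand side to $Q(\zu_N,z)/\Gamma_N$, giving
$$\frac{Q(\zu_N,z)}{\Gamma_N} \le \sum_{k=1}^N \frac{\gamma_k}{\Gamma_k}\left[\left(\beta_k+\tfrac{\eta_k^1}{T_k}\right)V(z\kp,z) - \tfrac{\beta_k+\eta_k^{T_k}}{T_k}V(z_k,z)\right].$$

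Finally I regroup the right-hand sum by the index to which each prox-term is attached: each interior $V(z_k,z)$, $1\le k\le N-1$, receives the coefficient $\tfrac{\gamma\kn}{\Gamma\kn}(\beta\kn+\tfrac{\eta\kn^1}{T\kn})$ from the $(k+1)$-st summand and $-\tfrac{\gamma_k}{\Gamma_k}\tfrac{\beta_k+\eta_k^{T_k}}{T_k}$ from the $k$-th summand. The monotonicity condition in \eqref{eq:cond_conv} (applied with index $k+1$) says precisely that the former is at most the latter, so the net coefficient of each interior term is nonpositive; as $V\ge 0$ by \eqref{eq:Vnorm}, those terms may be discarded. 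Only the boundary survivors remain, namely $\tfrac{\gamma_1}{\Gamma_1}(\beta_1+\tfrac{\eta_1^1}{T_1})V(z_0,z) = (\beta_1+\tfrac{\eta_1^1}{T_1})V(z_0,z)$ and $-\tfrac{\gamma_N}{\Gamma_N}\tfrac{\beta_N+\eta_N^{T_N}}{T_N}V(z_N,z)$, and multiplying through by $\Gamma_N$ yields exactly \eqref{eq:MPS}. I expect the only real obstacle to be the bookkeeping in the telescoped sum: correctly shifting the index to pair $V(z_k,z)$ across consecutive summands and confirming that condition \eqref{eq:cond_conv} is precisely the inequality needed to make the interior coefficients nonpositive. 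Everything else is routine algebra and sign-tracking.
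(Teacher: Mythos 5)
Your proposal is correct and follows essentially the same route as the paper's own proof: combine Lemmas \ref{lem:lin_approx} and \ref{lem:inner_est}, cancel the $-\gamma_k\beta_k V(z\kp,\tz_k)$ and $\tfrac{L\gamma_k^2}{2}\|\tz_k-z\kp\|^2$ terms via \eqref{eq:Vnorm} and $\beta_k\ge L\gamma_k$, then divide by $\Gamma_k$, telescope, and invoke the last condition in \eqref{eq:cond_conv} to discard the interior $V(z_k,z)$ terms. The paper states these steps tersely ("we conclude \eqref{eq:MPS} immediately"); your write-up simply supplies the index-shifting and sign-tracking details it omits.
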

	\begin{proof}
		Combining the results in Lemmas \ref{lem:lin_approx} and \ref{lem:inner_est} and recalling the strong convexity of $V(\cdot,\cdot)$ in  \eqref{eq:Vnorm}, we have
		\begin{align}
			& \left[G(\zu_k) - G(z) + \langle H(z), \zu_k - z\rangle + J(\zu_k) - J(z)\right]
			\\
			& - (1-\gamma_k)\left[G(\zu\kp) - G(z) + \langle H(z), \zu\kp - z\rangle + J(\zu\kp) - J(z)\right]
			\\
			\le & \gamma_k\left[\left(\beta_k + \frac{\eta_k^1}{T_k}  \right) V(z\kp, z) - \frac{\beta_k + \eta_k^{T_k} }{T_k} V(z_k, z)\right].
		\end{align}
		Dividing the above relation by $\Gamma_k$ defined in \eqref{eq:Gamma}, summing from $k=1\ldots,N$, and recalling our assumption of parameters in \eqref{eq:cond_conv}, we conclude \eqref{eq:MPS} immediately.
	\end{proof}
	
	\vgap
	
	In the following corollary, we describe a set of parameters that satisfies the assumptions in the above theorem.
	
	\vgap
	
	\begin{corollary}
		\label{corQRate}
		Suppose that the parameters in the outer iterations of Algorithm \ref{alg:MPSVI} are set to
		\begin{align}
			\label{eq:par}
			\gamma_k = \frac{2}{k+1},\ \beta_k = \frac{2L}{k},\ T_k=\left\lceil\frac{kM}{L}\right\rceil,\text{ and }\eta_k^t = \beta_k(t-1) + \frac{LT_k}{k}.
		\end{align}
		In order to compute an approximate solution $\zu_N$ such that $\sup_{z\in Z}Q(\zu_N, z)\le \varepsilon$, the number of evaluations of gradients $\nabla G(\cdot)$ and operators $H(\cdot)$ are bounded by
		\begin{align}
			\label{eq:N}
			N_{\nabla G}:=\cO\left(\sqrt{\frac{L\Omega_{z_0}}{\varepsilon}}\right)\text{ and }N_{H}:=\cO\left(\frac{M\Omega_{z_0}}{\varepsilon} + \sqrt{\frac{L\Omega_{z_0}}{\varepsilon}}\right),
		\end{align}
		respectively, where
		\begin{align}
			\label{eq:D0}
			\Omega_{z_0}:=\sup_{z\in Z}V(z_0, z).
		\end{align}
	\end{corollary}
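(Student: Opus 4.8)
The plan is to show that the parameter choice \eqref{eq:par} satisfies every hypothesis of Lemma \ref{lem:inner_est} and Theorem \ref{thm:MPS}, then invoke the bound \eqref{eq:MPS} and convert the resulting rate into evaluation counts. First I would verify the two conditions of Lemma \ref{lem:inner_est}. For \eqref{eq:cond_etaM}, the key observation is that $T_k = \lceil kM/L\rceil \ge kM/L$ gives $LT_k/k \ge M$; since $\eta_k^t = \beta_k(t-1) + LT_k/k \ge LT_k/k \ge M$ and $\beta_k + \eta_k^t \ge LT_k/k \ge M$ for every $t\ge 1$, the product $(\beta_k+\eta_k^t)\eta_k^t$ is at least $M^2$, as required. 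Condition \eqref{eq:cond_etabeta} holds with equality by construction, since $\beta_k + \eta_k^{t-1} = \beta_k + \beta_k(t-2) + LT_k/k = \eta_k^t$.

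Next I would check the conditions in \eqref{eq:cond_conv}. The relations $\gamma_1 = 1$, $\gamma_k = 2/(k+1)\in[0,1]$, and $\beta_k = 2L/k \ge 2L/(k+1) = L\gamma_k$ are immediate. The final inequality is the crux of the parameter design, and the plan is to first evaluate $\Gamma_k$ in closed form: the recursion \eqref{eq:Gamma} with $1-\gamma_k = (k-1)/(k+1)$ telescopes to $\Gamma_k = 2/(k(k+1))$, so that $\gamma_k/\Gamma_k = k$. A direct substitution then gives $\beta_k + \eta_k^1/T_k = 2L/k + L/k = 3L/k$ and $\beta_{k-1} + \eta_{k-1}^{T_{k-1}} = \beta_{k-1}T_{k-1} + LT_{k-1}/(k-1) = 3LT_{k-1}/(k-1)$, whence both sides of the last inequality in \eqref{eq:cond_conv} collapse to $3L$ and the condition holds with equality.

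With the hypotheses verified, Theorem \ref{thm:MPS} applies. Dropping the nonpositive second term on the right-hand side of \eqref{eq:MPS} and using $\Gamma_N = 2/(N(N+1))$ together with $\beta_1 + \eta_1^1/T_1 = 3L$ yields $\sup_{z\in Z}Q(\zu_N, z)\le 6L\,\Omega_{z_0}/(N(N+1))$. Requiring the right-hand side to be at most $\varepsilon$ forces $N = \cO(\sqrt{L\Omega_{z_0}/\varepsilon})$, which is exactly $N_{\nabla G}$ because each outer iteration evaluates $\nabla G$ only once, at $\zl_k$. For the operator count, each inner iteration evaluates $H$ twice, at $z_k^{t-1}$ and at $\tz_k^t$, so the total is $2\sum_{k=1}^N T_k \le 2\sum_{k=1}^N (kM/L + 1) = \cO(MN^2/L + N)$; substituting $N = \cO(\sqrt{L\Omega_{z_0}/\varepsilon})$ gives $MN^2/L = \cO(M\Omega_{z_0}/\varepsilon)$, which produces the claimed bound on $N_H$.

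The computations are all elementary; the one step carrying real content is the closed-form evaluation of $\Gamma_k$ and the resulting observation that the last inequality in \eqref{eq:cond_conv} is in fact an equality. This is where the choices $\gamma_k = 2/(k+1)$, $\beta_k = 2L/k$, and $\eta_k^t = \beta_k(t-1) + LT_k/k$ are tuned so that the telescoping sum in the proof of Theorem \ref{thm:MPS} collapses cleanly: mistuning any of these constants would destroy the monotonicity of $\tfrac{\gamma_k}{\Gamma_k}(\beta_k + \eta_k^1/T_k)$ across $k$ and invalidate the summation. I would therefore treat that verification, rather than the final arithmetic bounding of $N$, $N_{\nabla G}$, and $N_H$, as the substantive part of the argument.
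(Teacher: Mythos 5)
Your proposal is correct and follows essentially the same route as the paper's own proof: verify \eqref{eq:cond_etaM} via $\eta_k^t \ge LT_k/k \ge M$, verify \eqref{eq:cond_etabeta} and the first three conditions of \eqref{eq:cond_conv} directly, compute $\Gamma_k = 2/(k(k+1))$ so that both sides of the last condition in \eqref{eq:cond_conv} equal $3L$, then apply Theorem \ref{thm:MPS} to get $\sup_{z\in Z}Q(\zu_N,z)\le 6L\Omega_{z_0}/(N(N+1))$ and count $N_{\nabla G}$ gradient evaluations and $2\sum_{k} T_k = \cO(MN^2/L + N)$ operator evaluations. Your write-up is in fact slightly more explicit than the paper's (e.g., checking that \eqref{eq:cond_etabeta} holds with equality), but the substance is identical.
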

	\begin{proof}
		We can clearly see that
		\begin{align}
			\eta_k^t \ge \frac{LT_k}{k}\ge \frac{L}{k}\cdot\frac{kM}{L} = M,
		\end{align}
		so condition \eqref{eq:cond_etaM} holds. It is also straightforward to confirm that condition \eqref{eq:cond_etabeta} holds and that $\gamma_k=1$, $\gamma_k\in [0,1]$, and $\beta_k\ge L\gamma_k$ are all satisfied in condition \eqref{eq:cond_conv}. It suffices to verify the last condition in \eqref{eq:cond_conv} in order to apply Theorem \ref{thm:MPS}. 
		Applying our choice of $\gamma_k$ to the definition of $\Gamma_k$ in \eqref{eq:Gamma} we have $\Gamma_k=2/(k(k+1))$. Therefore, from our parameter setting \eqref{eq:par} we have
		\begin{align}
			\frac{\gamma_k}{\Gamma_k}\left(\beta_k + \frac{\eta_k^1}{T_k}\right) = 3L =  \frac{\gamma\kp(\beta\kp + \eta\kp^{T\kp})}{\Gamma\kp T\kp},
		\end{align}
		and hence all conditions of Theorem \ref{thm:MPS} are satisfied. Applying the theorem and substituting our parameter setting \eqref{eq:par} we have
		\begin{align}
			Q(\zu_N, z)\le & \Gamma_N\left(\beta_1 + \frac{\eta_1^1}{T_1}\right)V(z_0, z) - \frac{\gamma_N(\beta_N + \eta_N^{T_N})}{T_N}V(z_N, z) \le \frac{6L V(z_0, z)}{N(N+1)} \le \frac{6L\Omega_{z_0}}{N^2},\ \forall z\in Z.
		\end{align}
		Therefore, to obtain $\sup_{z\in Z}Q(\zu_N, z)\le \varepsilon$ it suffices to run at most $N_{\nabla G}$ iterations of Algorithm \ref{alg:MPSVI}, in which the gradients $\nabla G(\cdot)$ are evaluated $N_{\nabla G}$ times. Since there are $2T_k$ evaluations of operation $H(\cdot)$ in the $k$-th outer iterations of Algorithm \ref{alg:MPSVI}, the total number of evaluations of $H(\cdot)$ is bounded by
		\begin{align}
			\sum_{k=1}^{N_{\nabla G}}2T_k \le & 2\sum_{k=1}^{N_{\nabla G}}\left(\frac{kM}{L} + 1\right) = \frac{M}{L}N_{\nabla G}(N_{\nabla G}+1) + 2N_{\nabla G}
			\\
			\le & \frac{2M}{L}\left(N_{\nabla G}\right)^2 + 2N_{\nabla G} = \cO\left(\frac{M\Omega_{z_0}}{\varepsilon} + \sqrt{\frac{L\Omega_{z_0}}{\varepsilon}}\right).
		\end{align}
	\end{proof}
	
	\vgap
	
	In view of the results obtained in Theorem~\ref{thm:MPS} and Corollary~\ref{corQRate}, the proposed MPS method is able to compute an $\varepsilon$-approximate weak solution with at most $\cO((L/\varepsilon)^{1/2})$ gradient evaluations of $\nabla G$ and $\cO((L/\varepsilon)^{1/2}+ M/\varepsilon)$ operator evaluations of $H$. The MPS algorithm only requires the input information of the Lipschitz constants $L$ and $M$. Our result  reduces the computational effort required for evaluating $\nabla G$ from $\cO(1/\varepsilon)$ to $\cO((1/\varepsilon)^{1/2})$, which is significant especially when the evaluation of $\nabla G$ is the computational bottleneck of solving problem $VI(Z;G,H,J)$ in \eqref{eq:problem}.
	
	\section{The stochastic mirror-prox sliding method}
	In this section, we propose a stochastic version of the MPS method in Algorithm \ref{alg:MPSVI}. The proposed stochastic mirror-prox method (SMPS) is described in Algorithm~\ref{alg:SMPSVI}. 
	\begin{algorithm}[H]
		\caption{\label{alg:SMPSVI}The stochastic mirror-prox sliding (SMPS) method for solving $VI(Z;G,H,J)$}
		\begin{algorithmic}
			\State Modify \eqref{eq:MPSVI:tzkt} and \eqref{eq:MPSVI:zkt} in Algorithm \ref{alg:MPSVI} to
			\begin{align}
				\label{eq:SMPSVI:tzkt}
				\tz_k^t = & \argmin_{z\in Z}\langle \nabla G(\zl_k) + \cH(z_k\tp; \zeta_k^{2t-1}), z\rangle + J(z) + \beta_k V(z\kp, z) + \eta_k^t V(z_k\tp, z)\text{ and }
				\\
				\label{eq:SMPSVI:zkt}
				z_k^t = & \argmin_{z\in Z}\langle \nabla G(\zl_k) + \cH(\tz_k^t; \zeta_k^{2t}), z\rangle + J(z) + \beta_k V(z\kp, z) + \eta_k^t V(z_k\tp, z)
			\end{align}
			respectively.
		\end{algorithmic}
	\end{algorithm}
	
	The SMPS method in Algorithm \ref{alg:SMPSVI} accesses the operator $H$ through its stochastic samples. Specifically, we assume that when we need the information of $H(z)$ for any $z$, we are able to request an unbiased stochastic sample with bounded variance, namely, $\cH(z; \zeta)$ with $\E_{\zeta}\left[\cH(z; \zeta)\right] = H(z)$ and $\E_{\zeta}\left[\|\cH(z; \zeta) - H(z)\|_*^2\right] \le \sigma^2$. Due to the stochastic setting, our goal in the convergence analysis is to estimate the number of gradient evaluations of $\nabla G$ and stochastic sample evaluations of $\cH$ in order to compute a stochastic $\varepsilon$-approximate solution $\zu_N$ such that 
	\begin{align}
		\E\left[\sup_{z\in Z}Q(\zu_N, z)\right]\le \varepsilon,
	\end{align}
	where $Q(\cdot,\cdot)$ is defined in \eqref{eq:Q}. 
	
	Our tool for analysis is the following lemma, which is a stochastic extension of Lemma \ref{lem:inner_est}. The difference between Lemma \ref{lem:inner_est_S} below and the previous deterministic version Lemma \ref{lem:inner_est} is that we will need to address the propagation of inexactness of stochastic samples of operator $H$. We will introduce the following notations for the inexactness:
	\begin{align}
		\label{eq:Delta}
		\Delta_k^{2t-1}:=\cH(z_k\tp;\zeta_k^{2t-1}) - H(z_k\tp)\text{ and }\Delta_k^{2t}:=\cH(\tz_k^t;\zeta_k^{2t}) - H(\tz_k\tp).
	\end{align}
	We will also define three auxiliary sequences $\{w_k\}$, $\{\tw_k^t\}$ and $\{w_k^t\}$ (where $k=1,\ldots,N$ and $t=0,\ldots,T_k$) to describe the impact of the accumulation of inexactness on the approximate solutions computed by the proposed method. The sequences are defined recursively in the follow way. First, let us set $w_0=z_0$.
	Second, for any $k=1,\ldots,N$, we set $w_k^0=w\kp$ and then compute for all $t=1,\ldots,T_k$,
	\begin{align}
		\label{eq:wkt}
		w_k^t = \argmin_{z\in Z} -\langle \Delta_k^{2t}, z\rangle + \beta_k V(w\kp, z) + \eta_k^t V(w_k\tp, z).
	\end{align}
	Finally, we set $w_k = w_k^{T_k}$. Also, for any $k$ and $t$, we define
	\begin{align}
		\label{eq:twkt}
		\tw_k^t = \frac{\eta_k^t}{\beta_k + \eta_k^t}w_k\tp + \frac{\beta_k}{\beta_k + \eta_k^t}w\kp.
	\end{align}
	We make one observation that will be important for our future analysis. If all $\zeta_k^s$'s for stochastic samples are independently distributed, then for all $t\ge 1$ the above $\tw_k^t$ is independent of $\Delta_k^{2t}$ conditional on the history of all previous stochastic samples since it is a convex combination of $w_k\tp$ and $w\kp$. Consequently, noting from \eqref{eq:SMPSVI:tzkt} that $\tz_k^t$ is also independent of $\Delta_k^{2t}$ conditional on the history, we have
	\begin{align}
		\label{eq:tower}
		\E\left[\langle\Delta_k^{2t}, \tz_k^t - \tw_k^t \rangle\right] = 0,\ \forall k=1,\ldots,N,\ t=1,\ldots,T_k.
	\end{align}
	
	\vgap
	
	\begin{lemma}
		\label{lem:inner_est_S}
		If the parameters of Algorithm \ref{alg:SMPSVI} satisfy
		\begin{align}
			\label{eq:cond_etaM_S}
			M\le \sqrt{\frac{1}{3}(\beta_k + \eta_k^t)\eta_k^t},\ \forall k\ge 1 \text{ and }t\ge 1
		\end{align}
		and
		\begin{align}
			\label{eq:cond_etabeta_S}
			\eta_k^t\le \beta_k + \eta_k\tp,\ \forall k\ge 1\text{ and }t\ge 2,
		\end{align}
		then 
		\begin{align}
			\label{eq:inner_est_S}
			\begin{aligned}
				& \langle G(\zl_k) + H(z), \tz_k - z\rangle + J(\tz_k) - J(z)
				\\
				\le & - \beta_k V(z\kp, \tz_k^t)  + \left(\beta_k + \frac{\eta_k^1}{T_k}\right) (V(z\kp, z) + V(w\kp, z)) - \frac{\beta_k + \eta_k^{T_k}}{T_k} (V(z_k, z) + V(w_k, z))
				\\
				& + \frac{1}{T_k}\sum_{t=1}^{T_k}\frac{1}{2(\beta_k + \eta_k^t)}\left(3\|\Delta_k^{2t-1}\|_*^2 + 4\|\Delta_k^{2t}\|_*^2\right) - \langle \Delta_k^{2t}, \tz_k^t - \tw_k^t\rangle,
			\end{aligned}
		\end{align}
		where $\tw_k^t$ is defined in \eqref{eq:twkt}.
	\end{lemma}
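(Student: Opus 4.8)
The plan is to mirror the proof of Lemma~\ref{lem:inner_est} step by step, inserting the stochastic operators $\cH = H + \Delta$ from \eqref{eq:Delta} and then carefully accounting for the two families of error terms that this substitution creates. First I would write the optimality conditions for the stochastic prox-steps \eqref{eq:SMPSVI:tzkt} and \eqref{eq:SMPSVI:zkt}, exactly as in the deterministic argument, but now with $\cH(z_k\tp;\zeta_k^{2t-1}) = H(z_k\tp) + \Delta_k^{2t-1}$ and $\cH(\tz_k^t;\zeta_k^{2t}) = H(\tz_k^t) + \Delta_k^{2t}$. Setting $z = z_k^t$ in the first and adding it to the second reproduces the deterministic combination, now carrying two extra terms on the left: $\langle \Delta_k^{2t-1}, \tz_k^t - z_k^t\rangle$ and $\langle \Delta_k^{2t}, z_k^t - z\rangle$. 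The $H$-terms are treated exactly as before through monotonicity, Cauchy--Schwarz and $M$-Lipschitz continuity, producing $\langle H(z), \tz_k^t - z\rangle$ together with the cross term $M\|z_k\tp - \tz_k^t\|\,\|\tz_k^t - z_k^t\|$ and the negative quadratics $-\tfrac12(\beta_k+\eta_k^t)\|\tz_k^t - z_k^t\|^2 - \tfrac{\eta_k^t}{2}\|z_k\tp - \tz_k^t\|^2$.

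The heart of the argument is the treatment of the two noise terms, moved to the right-hand side as $-\langle \Delta_k^{2t-1}, \tz_k^t - z_k^t\rangle$ and $\langle \Delta_k^{2t}, z - z_k^t\rangle$. The first is benign: being independent of $z$, a single application of Young's inequality bounds it by a multiple of $\|\Delta_k^{2t-1}\|_*^2$ plus a multiple of $\|\tz_k^t - z_k^t\|^2$. The second is delicate, because $z_k^t$ depends on $\Delta_k^{2t}$ while $z$ is arbitrary, so it cannot be made zero-mean directly; this is where the auxiliary sequences enter. I would split
\(
\langle \Delta_k^{2t}, z - z_k^t\rangle
= \langle \Delta_k^{2t}, z - w_k^t\rangle
+ \langle \Delta_k^{2t}, w_k^t - \tw_k^t\rangle
+ \langle \Delta_k^{2t}, \tw_k^t - \tz_k^t\rangle
+ \langle \Delta_k^{2t}, \tz_k^t - z_k^t\rangle.
\)
The first piece is controlled by the optimality condition of the $w$-step \eqref{eq:wkt}, which, after discarding two nonpositive Bregman terms, yields $\beta_k V(w\kp, z) + \eta_k^t V(w_k\tp, z) - (\beta_k+\eta_k^t)V(w_k^t, z)$ --- structurally identical to the $z$-side Bregman terms, so the two families will telescope in parallel. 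The third piece is precisely the zero-mean term isolated in \eqref{eq:tower}, and it is retained as $-\langle \Delta_k^{2t}, \tz_k^t - \tw_k^t\rangle$. The fourth piece is again absorbed by Young's inequality into $\|\Delta_k^{2t}\|_*^2$ and $\|\tz_k^t - z_k^t\|^2$.

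The main obstacle is the second piece $\langle \Delta_k^{2t}, w_k^t - \tw_k^t\rangle$, which must be bounded by a multiple of $\|\Delta_k^{2t}\|_*^2/(\beta_k+\eta_k^t)$. I would obtain this from a prox-stability estimate of the form $\|w_k^t - \tw_k^t\| \le \|\Delta_k^{2t}\|_*/(\beta_k+\eta_k^t)$: since $w_k^t$ minimizes the $(\beta_k+\eta_k^t)$-strongly convex objective in \eqref{eq:wkt} while $\tw_k^t$ from \eqref{eq:twkt} is the corresponding noise-free prox center, adding the two optimality inequalities and invoking the strong convexity \eqref{eq:Vnorm} should give the bound, after which Cauchy--Schwarz closes the estimate. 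I expect this stability step --- reconciling the arithmetic averaging used in \eqref{eq:twkt} with the Bregman geometry underlying \eqref{eq:wkt} --- to be the most technical point, and it is exactly the reason the $w$-sequence and its average $\tw_k^t$ are introduced in the first place.

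Finally I would collect terms: the cross term $M\|z_k\tp - \tz_k^t\|\,\|\tz_k^t - z_k^t\|$ together with all the $\|\tz_k^t - z_k^t\|^2$ contributions generated by the three Young steps must be absorbed into the two negative quadratics. Allocating the $-\tfrac12(\beta_k+\eta_k^t)\|\tz_k^t - z_k^t\|^2$ budget between the $M$-term and the noise, and matching constants, is what forces the strengthened condition \eqref{eq:cond_etaM_S} --- the factor $\tfrac13$, in place of the $1$ in \eqref{eq:cond_etaM}, measures the portion of the quadratic budget reserved for the noise --- and yields the coefficients $3\|\Delta_k^{2t-1}\|_*^2 + 4\|\Delta_k^{2t}\|_*^2$. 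Averaging the resulting per-$t$ inequality over $t = 1,\ldots,T_k$, using convexity of $J$ and of $V(z\kp,\cdot)$ on the left (Jensen, which also supplies the $-\beta_k V(z\kp,\tz_k)$ term), and telescoping both the $z$- and $w$-Bregman terms via \eqref{eq:cond_etabeta_S} with $z_k^0 = z\kp$ and $w_k^0 = w\kp$, then delivers \eqref{eq:inner_est_S}.
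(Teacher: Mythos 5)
Your proposal reproduces the paper's architecture almost exactly --- the same four-way splitting of the noise term through $w_k^t$ and $\tw_k^t$, the same retained zero-mean term $-\langle \Delta_k^{2t}, \tz_k^t - \tw_k^t\rangle$, the same parallel telescoping of the $z$- and $w$-Bregman terms --- but there is a genuine gap at precisely the step you flag as the most technical one. Your bound on $\langle \Delta_k^{2t}, w_k^t - \tw_k^t\rangle$ rests on treating $\tw_k^t$ as ``the corresponding noise-free prox center'' and adding two optimality inequalities. This presumes that $\tw_k^t$ minimizes $z\mapsto \beta_k V(w\kp,z)+\eta_k^t V(w_k\tp,z)$ over $Z$, which is true only for the Euclidean prox-function $V(u,z)=\tfrac12\|z-u\|_2^2$. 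For a general Bregman divergence \eqref{eq:V}, the arithmetic convex combination \eqref{eq:twkt} is \emph{not} that minimizer and satisfies no optimality inequality at all, so there are no ``two optimality inequalities'' to add, and the stability estimate $\|w_k^t-\tw_k^t\|\le \|\Delta_k^{2t}\|_*/(\beta_k+\eta_k^t)$ is unjustified. You also cannot repair this by redefining $\tw_k^t$ to be the exact noise-free Bregman prox point, since the lemma's conclusion and the downstream zero-mean cancellation \eqref{eq:tower} used in Theorem \ref{thm:SMPS} refer to $\tw_k^t$ as given by \eqref{eq:twkt}. A secondary issue: even in the Euclidean case, the stability route only gives $\langle \Delta_k^{2t}, w_k^t-\tw_k^t\rangle\le \|\Delta_k^{2t}\|_*^2/(\beta_k+\eta_k^t)$, i.e.\ a coefficient of $2$ in units of $\tfrac{1}{2(\beta_k+\eta_k^t)}$, so with your equal-thirds allocation of the quadratic budget you would end with $3\|\Delta_k^{2t-1}\|_*^2+5\|\Delta_k^{2t}\|_*^2$ rather than the stated $3$ and $4$.

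The fix is the paper's argument, and it uses exactly the terms you discarded. Do not drop the ``nonpositive'' Bregman terms $-\beta_k V(w\kp,w_k^t)-\eta_k^t V(w_k\tp,w_k^t)$ from the optimality condition of \eqref{eq:wkt}; keep them. Then bound the second piece using nothing but the definition \eqref{eq:twkt}:
\begin{align}
	-\langle \Delta_k^{2t}, \tw_k^t - w_k^t\rangle
	= -\frac{\eta_k^t}{\beta_k + \eta_k^t}\langle \Delta_k^{2t}, w_k\tp - w_k^t\rangle
	- \frac{\beta_k}{\beta_k + \eta_k^t}\langle \Delta_k^{2t}, w\kp - w_k^t\rangle,
\end{align}
apply Cauchy--Schwarz and Young's inequality to each of the two terms separately, and absorb the resulting $\tfrac{\eta_k^t}{2}\|w_k\tp - w_k^t\|^2$ and $\tfrac{\beta_k}{2}\|w\kp - w_k^t\|^2$ into the retained negative Bregman terms via \eqref{eq:Vnorm}; what remains is exactly $\tfrac{1}{2(\beta_k+\eta_k^t)}\|\Delta_k^{2t}\|_*^2$, which accounts for the fourth unit of $\|\Delta_k^{2t}\|_*^2$ in \eqref{eq:inner_est_S}. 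With that replacement your plan goes through: your device of expanding $\cH=H+\Delta$ immediately and running three separate Young steps, versus the paper's device of keeping $\cH(z_k\tp;\zeta_k^{2t-1})-\cH(\tz_k^t;\zeta_k^{2t})$ together and splitting its square by $(a+b+c)^2\le 3(a^2+b^2+c^2)$, is an equivalent rearrangement yielding the same constants and the same role for condition \eqref{eq:cond_etaM_S}.
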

	\begin{proof}
		From the optimality condition of \eqref{eq:SMPSVI:tzkt} we have
		\begin{align}
			\begin{aligned}
				& \langle \nabla G(\zl_k) + \cH(z_k\tp;\zeta_k^{2t-1}), \tz_k^t - z\rangle + J(\tz_k^t) - J(z)
				\\
				\le & \beta_k\left(V(z\kp, z) - V(z\kp, \tz_k^t) - V(\tz_k^t, z)\right)
				\\
				& + \eta_k^t \left(V(z_k\tp, z) - V(z_k\tp, \tz_k^t) - V(\tz_k^t, z)\right),\ \forall z\in Z.
			\end{aligned}		
		\end{align}
		Specially, setting $z=z_k^t$ in the above relation we have
		\begin{align}
			\begin{aligned}
				& \langle \nabla G(\zl_k) + \cH(z_k\tp;\zeta_k^{2t-1}), \tz_k^t - z_k^t\rangle + J(\tz_k^t) - J(z^k_t)
				\\
				\le & \beta_k\left(V(z\kp, z^k_t) - V(z\kp, \tz_k^t) - V(\tz_k^t, z^k_t)\right)
				\\
				& + \eta_k^t \left(V(z_k\tp, z^k_t) - V(z_k\tp, \tz_k^t) - V(\tz_k^t, z^k_t)\right).
			\end{aligned}		
		\end{align}	
		Moreover, from the optimality condition of \eqref{eq:SMPSVI:zkt} we also have
		\begin{align}
			\begin{aligned}
				& \langle \nabla G(\zl_k) + \cH(\tz_k^t;\zeta_k^{2t}), z_k^t - z\rangle + J(z_k^t) - J(z)
				\\
				\le & \beta_k\left(V(z\kp, z) - V(z\kp, z_k^t) - V(z_k^t, z)\right)
				\\
				& + \eta_k^t \left(V(z_k\tp, z) - V(z_k\tp, z_k^t) - V(z_k^t, z)\right),\ \forall z\in Z.
			\end{aligned}
		\end{align}
		Summing the above two inequalities and recalling the strong convexity of prox-function $V(\cdot,\cdot)$ in \eqref{eq:Vnorm}, we obtain
		\begin{align}
			& \langle G(\zl_k), \tz_k^t - z\rangle + \langle \cH(\tz_k^t;\zeta_k^{2t}), z_k^t - z\rangle + \langle \cH(z_k\tp;\zeta_k^{2t-1}), \tz_k^t - z_k^t\rangle + J(\tz_k^t) - J(z)
			\\
			\le & \beta_k\left(V(z\kp, z) - V(z_k^t, z) - V(z\kp, \tz_k^t) - V(\tz_k^t, z^k_t)\right)
			\\
			& + \eta_k^t \left(V(z_k\tp, z) - V(z_k^t, z)- V(z_k\tp, \tz_k^t) - V(\tz_k^t, z^k_t)\right)
			\\
			\le & \beta_k V(z\kp, z) - \beta_k V(z\kp, \tz_k^t) + \eta_k^t V(z_k\tp, z) - (\beta_k + \eta_k^t) V(z_k^t, z)
			\\
			& - \frac{1}{2}(\beta_k + \eta_k^t) \|\tz_k^t - z_k^t\|^2 - \frac{\eta_k^t}{2} \|z_k\tp - \tz_k^t\|^2
			,\ \forall z\in Z.
		\end{align}
		In the above relation, note that
		\begin{align}
			& \langle \cH(\tz_k^t;\zeta_k^{2t}), z_k^t - z\rangle + \langle \cH(z_k\tp;\zeta_k^{2t-1}), \tz_k^t - z_k^t\rangle
			\\
			= & \langle \cH(z_k\tp;\zeta_k^{2t-1}) - \cH(\tz_k^t;\zeta_k^{2t}), \tz_k^t - z_k^t\rangle + \langle H(\tz_k^t), \tz_k^t - z\rangle + \langle \Delta_k^{2t}, \tz_k^t - z\rangle
			\\
			\ge & -\|\cH(z_k\tp;\zeta_k^{2t-1}) - \cH(\tz_k^t;\zeta_k^{2t})\|_*\|\tz_k^t - z_k^t\| + \langle H(z), \tz_k^t - z\rangle + \langle \Delta_k^{2t}, \tz_k^t - z\rangle
			\\
			\ge & -\frac{1}{2(\beta_k + \eta_k^t)}\|\cH(z_k\tp;\zeta_k^{2t-1}) - \cH(\tz_k^t;\zeta_k^{2t})\|_*^2 - \frac{1}{2}(\beta_k + \eta_k^t)\|\tz_k^t - z_k^t\|^2  + \langle H(z), \tz_k^t - z\rangle + \langle \Delta_k^{2t}, \tz_k^t - z\rangle.
		\end{align}
		Here we use the definition of $\Delta_k^{2t}$ in \eqref{eq:Delta}, Cauchy-Schwartz inequality, Young's inequality, and the monotonicity of $H(\cdot)$. Summarizing the above two relations we have
		\begin{align}
			\label{eq:tmp1}
			\begin{aligned}
				& \langle G(\zl_k) + H(z), \tz_k^t - z\rangle + J(\tz_k^t) - J(z)
				\\
				\le & \beta_k V(z\kp, z) - \beta_k V(z\kp, \tz_k^t) + \eta_k^t V(z_k\tp, z) - (\beta_k + \eta_k^t) V(z_k^t, z)
				\\
				& + \frac{1}{2(\beta_k + \eta_k^t)}\|\cH(z_k\tp;\zeta_k^{2t-1}) - \cH(\tz_k^t;\zeta_k^{2t})\|_*^2  - \frac{\eta_k^t}{2} \|z_k\tp - \tz_k^t\|^2 - \langle \Delta_k^{2t}, \tz_k^t - z\rangle.
			\end{aligned}
		\end{align}
		We make a few observations related to the above relation. First, by the optimality condition of \eqref{eq:wkt} we have
		\begin{align}
			& -\langle \Delta_k^{2t}, w_k^t - z\rangle 
			\\
			\le & \beta_k (V(w\kp, z) - V(w\kp, w_k^t) - V(w_k^t,z)) + \eta_k^t(V(w_k\tp, z) - V(w_k\tp, w_k^t) - V(w_k^t,z)),\ \forall z\in Z.
		\end{align}
		Second, by the definition of $\tw_k^t$ in \eqref{eq:twkt}, the Cauchy-Schwartz inequality, Young's inequality, and the strong convexity of prox-function $V(\cdot,\cdot)$ in \eqref{eq:Vnorm} we also have
		\begin{align}
			& - \langle \Delta_k^{2t}, \tw_k^t - w_k^t\rangle 
			\\
			= & -\frac{\eta_k^t}{\beta_k + \eta_k^t}\langle \Delta_k^{2t}, w_k\tp - w_k^t\rangle - \frac{\beta_k}{\beta_k + \eta_k^t}\langle \Delta_k^{2t}, w\kp - w_k^t\rangle
			\\
			\le & \frac{\eta_k^t}{\beta_k + \eta_k^t}\|\Delta_k^{2t}\|_*\|w_k\tp - w_k^t\| + \frac{\beta_k}{\beta_k + \eta_k^t}\|\Delta_k^{2t}\|_*\|w\kp - w_k^t\|
			\\
			\le & \frac{\eta_k^t}{2(\beta_k + \eta_k^t)^2}\|\Delta_k^{2t}\|_*^2 + \frac{\eta_k^t}{2}\|w_k\tp - w_k^t\|_2^2  + \frac{\beta_k}{2(\beta_k + \eta_k^t)^2}\|\Delta_k^{2t}\|_*^2 + \frac{\beta_k}{2}\|w\kp - w_k^t\|_2^2
			\\
			\le & \frac{1}{2(\beta_k + \eta_k^t)}\|\Delta_k^{2t}\|_*^2 + \beta_k V(w\kp, w_k^t) + \eta_k^t V(w_k\tp, w_k^t).
		\end{align}
		Third, by the Cauchy-Schwartz inequality, $M$-Lipschitz continuity of $H(\cdot)$ and the condition \eqref{eq:cond_etaM_S} we have
		\begin{align}
			& \frac{1}{2(\beta_k + \eta_k^t)}\|\cH(z_k\tp;\zeta_k^{2t-1}) - \cH(\tz_k^t;\zeta_k^{2t})\|_*^2  - \frac{\eta_k^t}{2} \|z_k\tp - \tz_k^t\|^2
			\\
			\le & \frac{3}{2(\beta_k + \eta_k^t)}\left(\|\Delta_k^{2t-1}\|_*^2 + \|H(z_k\tp) - H(\tz_k^t)\|_*^2 + \|\Delta_k^{2t}\|_*^2\right)  - \frac{\eta_k^t}{2} \|z_k\tp - \tz_k^t\|^2
			\\
			\le & \frac{3}{2(\beta_k + \eta_k^t)}\left(\|\Delta_k^{2t-1}\|_*^2 + \|\Delta_k^{2t}\|_*^2\right) - \frac{1}{2}\left(\eta_k^t - \frac{3M^2}{\beta_k + \eta_k^t}\right)\|z_k\tp - \tz_k^t\|^2
			\\
			\le & \frac{3}{2(\beta_k + \eta_k^t)}\left(\|\Delta_k^{2t-1}\|_*^2 + \|\Delta_k^{2t}\|_*^2\right).
		\end{align}
		Applying the above three observations to \eqref{eq:tmp1} we have
		\begin{align}
			& \langle G(\zl_k) + H(z), \tz_k^t - z\rangle + J(\tz_k^t) - J(z)
			\\
			\le & \beta_k (V(z\kp, z) + V(w\kp, z)) - \beta_k V(z\kp, \tz_k^t) 
			\\
			& + \eta_k^t (V(z_k\tp, z) + V(w_k\tp, z)) - (\beta_k + \eta_k^t) (V(z_k^t, z) + V(w_k^t, z))
			\\
			& + \frac{1}{2(\beta_k + \eta_k^t)}\left(3\|\Delta_k^{2t-1}\|_*^2 + 4\|\Delta_k^{2t}\|_*^2\right) - \langle \Delta_k^{2t}, \tz_k^t - \tw_k^t\rangle.
		\end{align}
		Taking average of the above relation from $t=1,\ldots,T_k$ and recalling the convexity of functions $J(\cdot)$,  $V(z\kp, \cdot)$ and $V(z\kp, \cdot)$ and the definition that $\tz_k = (1/T_k)\sum_{t=1}^{T_k}\tz_k^t$, we have
		\begin{align}
			& \langle G(\zl_k) + H(z), \tz_k - z\rangle + J(\tz_k) - J(z)
			\\
			\le & \beta_k (V(z\kp, z) + V(w\kp, z)) - \beta_k V(z\kp, \tz_k) 
			\\
			& + \frac{1}{T_k}\sum_{t=1}^{T_k}\eta_k^t (V(z_k\tp, z) + V(w_k\tp, z)) - (\beta_k + \eta_k^t) (V(z_k^t, z) + V(w_k^t, z))
			\\
			& + \frac{1}{T_k}\sum_{t=1}^{T_k}\frac{1}{2(\beta_k + \eta_k^t)}\left(3\|\Delta_k^{2t-1}\|_*^2 + 4\|\Delta_k^{2t}\|_*^2\right)  - \langle \Delta_k^{2t}, \tz_k^t - \tw_k^t\rangle.
		\end{align}
		We conclude our result \eqref{eq:inner_est_S} immediately by applying the condition of parameters $\beta_k$ and $\eta_k^t$ in \eqref{eq:cond_etabeta_S} and recalling that $z_k^0 = z\kp$, $z_k^{T_k}=z_k$, $w_k^0=w\kp$, and $w_k^{T_k} = w_k$.

	\end{proof}

	\vgap
	
	With the help of the above lemma, we are now ready to present in the theorem below the convergence result of the SMPS method. 
	
	\vgap
	
	\begin{theorem}
		\label{thm:SMPS}
		Suppose that $\zeta_k^s$'s are independently distributed random samples and that the stochastic operator $\cH$ satisfies unbiasedness $\E_{\zeta_k^{2t-1}}\left[\cH(z_k\tp; \zeta_k^{2t-1})\right] = H(z_k\tp)$ and $\E_{\zeta_k^{2t}}\left[\cH(\tz_k^t; \zeta_k^{2t})\right]=H(\tz_k^t)$ and bounded variance $\E_{\zeta_k^{2t-1}}\left[\|\cH(z_k\tp; \zeta_k^{2t-1})-H(z_k\tp)\|_*^2\right] \le\sigma^2$ and $\E_{\zeta_k^{2t}}\left[\|\cH(\tz_k^t; \zeta_k^{2t})- H(\tz_k^t)\|_*^2\right]\le\sigma^2$ for all $k=1,\ldots,N$ and $t=1,\ldots,T_k$. 
		If conditions \eqref{eq:cond_etaM_S} and \eqref{eq:cond_etabeta_S} hold, and that
		\begin{align}
			\label{eq:cond_conv_S}
			\gamma_1 = 1, \gamma_k\in [0,1], \beta_k\ge L\gamma_k,\ \forall k\ge 1, \text{ and }\frac{\gamma_k}{\Gamma_k}\left(\beta_k + \frac{\eta_k^1}{T_k}\right) \le \frac{\gamma\kp(\beta\kp + \eta\kp^{T\kp})}{\Gamma\kp T\kp},\ \forall k\ge 2,
		\end{align}
		then we have
		\begin{align}
			\label{eq:MPS_S}
			\begin{aligned}
				& \E\left[\sup_{z\in Z}Q(\zu_N, z)\right] \le 2\Gamma_N\left(\beta_1 + \frac{\eta_1^1}{T_1}\right)\Omega_{z_0} + \Gamma_N\sum_{k=1}^{N}\frac{\gamma_k}{\Gamma_k T_k}\sum_{t=1}^{T_k}\frac{7\sigma^2}{2(\beta_k + \eta_k^t)},\ \forall z\in Z.
			\end{aligned}
		\end{align}
		Here $Q(\cdot,\cdot)$ and $\Omega_{z_0}$ are defined in \eqref{eq:Q} and \eqref{eq:D0} respectively.
	\end{theorem}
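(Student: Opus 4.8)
The plan is to mirror the proof of Theorem~\ref{thm:MPS}, replacing the deterministic per-iteration bound by its stochastic counterpart Lemma~\ref{lem:inner_est_S} and carefully tracking the noise. First I would combine Lemma~\ref{lem:lin_approx} with Lemma~\ref{lem:inner_est_S}: substituting the bound \eqref{eq:inner_est_S} for $\langle \nabla G(\zl_k)+H(z),\tz_k-z\rangle + J(\tz_k)-J(z)$ into the right-hand side of Lemma~\ref{lem:lin_approx}, the negative prox term $-\beta_k V(z\kp,\tz_k)$ is used to absorb the quadratic term $\frac{L\gamma_k}{2}\|\tz_k - z\kp\|^2$, since $V(z\kp,\tz_k)\ge \frac12\|z\kp-\tz_k\|^2$ by \eqref{eq:Vnorm} and $\beta_k\ge L\gamma_k$ by \eqref{eq:cond_conv_S}. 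This yields, for every $z\in Z$,
\begin{align}
& \left[G(\zu_k)-G(z)+\langle H(z),\zu_k-z\rangle + J(\zu_k)-J(z)\right]
\\
& - (1-\gamma_k)\left[G(\zu\kp)-G(z)+\langle H(z),\zu\kp-z\rangle + J(\zu\kp)-J(z)\right]
\\
\le & \gamma_k\left(\beta_k+\frac{\eta_k^1}{T_k}\right)(V(z\kp,z)+V(w\kp,z)) - \gamma_k\frac{\beta_k+\eta_k^{T_k}}{T_k}(V(z_k,z)+V(w_k,z)) + \gamma_k R_k,
\end{align}
where the stochastic remainder is collected into $R_k := \frac{1}{T_k}\sum_{t=1}^{T_k}\bigl[\frac{1}{2(\beta_k+\eta_k^t)}(3\|\Delta_k^{2t-1}\|_*^2+4\|\Delta_k^{2t}\|_*^2) - \langle \Delta_k^{2t}, \tz_k^t-\tw_k^t\rangle\bigr]$.

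Next I would divide by $\Gamma_k$ and sum over $k=1,\ldots,N$, exactly as in Theorem~\ref{thm:MPS}. Since $(1-\gamma_k)/\Gamma_k = 1/\Gamma\kp$, the left-hand side telescopes and, using $\gamma_1=1$, collapses to $Q(\zu_N,z)/\Gamma_N$. The prox terms now come in two families, $\{V(z_k,z)\}$ and $\{V(w_k,z)\}$, but both telescope in the same way: the last condition in \eqref{eq:cond_conv_S} guarantees that the net coefficient of every intermediate $V(z\kp,z)+V(w\kp,z)$ (for $k=2,\ldots,N$) is nonpositive. Recalling $w_0=z_0$ and $\gamma_1=\Gamma_1=1$, only the initial term $2(\beta_1+\eta_1^1/T_1)V(z_0,z)$ survives with a positive sign, while the terminal term $-\frac{\gamma_N(\beta_N+\eta_N^{T_N})}{\Gamma_N T_N}(V(z_N,z)+V(w_N,z))$ is nonpositive and may be dropped. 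Multiplying through by $\Gamma_N$ gives the almost-sure bound
\begin{align}
Q(\zu_N,z) \le 2\Gamma_N\left(\beta_1+\frac{\eta_1^1}{T_1}\right)V(z_0,z) + \Gamma_N\sum_{k=1}^{N}\frac{\gamma_k}{\Gamma_k}R_k,\ \forall z\in Z.
\end{align}

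The crucial observation—and the reason the auxiliary sequences $\{w_k\}$, $\{w_k^t\}$, $\{\tw_k^t\}$ were introduced in Lemma~\ref{lem:inner_est_S}—is that $R_k$ does \emph{not} depend on the test point $z$. Hence I can take $\sup_{z\in Z}$ of the previous display pathwise before taking expectation: the supremum acts only on $V(z_0,z)$, giving $\sup_{z\in Z}V(z_0,z)=\Omega_{z_0}$ by \eqref{eq:D0}, and the $z$-free remainder is left untouched. Taking expectation of the resulting almost-sure inequality, and using that $\E[\langle \Delta_k^{2t},\tz_k^t-\tw_k^t\rangle]=0$ by \eqref{eq:tower} together with $\E\|\Delta_k^{2t-1}\|_*^2\le\sigma^2$ and $\E\|\Delta_k^{2t}\|_*^2\le\sigma^2$ from the bounded-variance hypothesis, I obtain $\E[R_k]\le \frac{1}{T_k}\sum_{t=1}^{T_k}\frac{7\sigma^2}{2(\beta_k+\eta_k^t)}$, which assembles into exactly \eqref{eq:MPS_S}.

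The main obstacle is the interchange of $\sup_{z\in Z}$ and $\E$: the target is $\E[\sup_z Q]$, not $\sup_z\E[Q]$, so the supremum must be resolved pathwise before expectation. This step is legitimate only because the noise-dependent part $R_k$ is $z$-free and because \eqref{eq:tower} forces the expectation of the bilinear noise term to vanish; the latter in turn rests on the conditional independence of $\tz_k^t$ and $\tw_k^t$ from $\Delta_k^{2t}$, which is precisely why $\tw_k^t$ was defined in \eqref{eq:twkt} as a convex combination of iterates computed before the sample $\zeta_k^{2t}$ is drawn.
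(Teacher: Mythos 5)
Your proposal is correct and follows essentially the same route as the paper's own proof: combine Lemma~\ref{lem:lin_approx} with Lemma~\ref{lem:inner_est_S} (using $\beta_k\ge L\gamma_k$ and \eqref{eq:Vnorm} to absorb the quadratic term), divide by $\Gamma_k$, telescope via \eqref{eq:cond_conv_S}, take the supremum over $z$ pathwise (legitimate because the noise terms are $z$-free), and then take expectation using \eqref{eq:tower} and the variance bound. You also correctly identify and articulate the key subtlety---that the auxiliary sequences $\{w_k^t\},\{\tw_k^t\}$ exist precisely to make the stochastic remainder independent of the test point $z$, so that $\E[\sup_z(\cdot)]$ rather than $\sup_z\E[(\cdot)]$ can be controlled---which the paper's proof uses implicitly but does not spell out.
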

	\begin{proof}
		Combining the results in Lemmas \ref{lem:lin_approx} and \ref{lem:inner_est_S} and recalling the strong convexity of $V(\cdot,\cdot)$ in \eqref{eq:Vnorm}, we have
		\begin{align}
			& \left[G(\zu_k) - G(z) + \langle H(z), \zu_k - z\rangle + J(\zu_k) - J(z)\right]
			\\
			& - (1-\gamma_k)\left[G(\zu\kp) - G(z) + \langle H(z), \zu\kp - z\rangle + J(\zu\kp) - J(z)\right]
			\\
			\le & \gamma_k\left[\left(\beta_k + \frac{\eta_k^1}{T_k}  \right) (V(z\kp, z) + V(w\kp, z)) - \frac{\beta_k + \eta_k^{T_k} }{T_k} (V(z_k, z) + V(w_k, z))\right]
			\\
			& + \frac{\gamma_k}{T_k}\sum_{t=1}^{T_k}\frac{1}{2(\beta_k + \eta_k^t)}\left(3\|\Delta_k^{2t-1}\|_*^2 + 4\|\Delta_k^{2t}\|_*^2\right) - \langle \Delta_k^{2t}, \tz_k^t - \tw_k^t\rangle.
		\end{align}
		Dividing the above relation by $\Gamma_k$ defined in \eqref{eq:Gamma}, summing from $k=1\ldots,N$, and recalling our assumption of parameters in \eqref{eq:cond_conv_S} and the definition of $Q(\cdot,\cdot)$ in \eqref{eq:Q}, we have
		\begin{align}
			\frac{1}{\Gamma_N}Q(\zu_N, z) \le & \left(\beta_1 + \frac{\eta_1^1}{T_1}\right)(V(z_0,z) + V(w_0,z)) - \frac{\gamma_N(\beta_N+\eta_N^{T_N})}{\Gamma_N T_N}(V(z_N,z) + V(w_N,z))
			\\
			& + \sum_{k=1}^{N}\frac{\gamma_k}{\Gamma_k T_k}\sum_{t=1}^{T_k}\frac{1}{2(\beta_k + \eta_k^t)}\left(3\|\Delta_k^{2t-1}\|_*^2 + 4\|\Delta_k^{2t}\|_*^2\right) - \langle \Delta_k^{2t}, \tz_k^t - \tw_k^t\rangle.
		\end{align}
		From the above result, recalling that $w_0=z_0$ and using notation $\Omega_{z_0}$ defined in \eqref{eq:D0}, we have
		\begin{align}
			\sup_{z\in Z}Q(\zu_N,z)\le & 2\Gamma_N\left(\beta_1 + \frac{\eta_1^1}{T_1}\right)\Omega_{z_0} + \Gamma_N\sum_{k=1}^{N}\frac{\gamma_k}{\Gamma_k T_k}\sum_{t=1}^{T_k}\frac{1}{2(\beta_k + \eta_k^t)}\left(3\|\Delta_k^{2t-1}\|_*^2 + 4\|\Delta_k^{2t}\|_*^2\right) - \langle \Delta_k^{2t}, \tz_k^t - \tw_k^t\rangle.
		\end{align}
		Taking expectation on both sides of the above inequality and recalling our observation in \eqref{eq:tower} concerning the expectation of the last inner product term above, we conclude our result \eqref{eq:MPS_S}.
	\end{proof}
	
	\vgap 
	
	Comparing the results of the above theorem and its deterministic counterpart in Theorem \ref{thm:MPS}, we can observe that the gap function value/expectation of gap function value can be bounded by a quantity involving $\Omega_{z_0}$, which converges to $0$ in the order of $\cO(L/k^2)$. There is an extra summation in the result \eqref{eq:MPS_S} concerning the variance $\sigma^2$ due to the stochastic setting. However, we will prove in the following corollary that, with proper choice of parameters, the summation in \eqref{eq:MPS_S} is also of order $\cO(L/k^2)$.
	
	\vgap
	
	\begin{corollary}
		\label{corQRate_S}
		Suppose that the parameters in the outer iterations of Algorithm \ref{alg:MPSVI} are set to 
		\begin{align}
			\label{eq:par_S}
			\gamma_k = \frac{2}{k+1},\ \beta_k = \frac{2L}{k},\ T_k=\left\lceil\frac{\sqrt{3}kM}{L} + \frac{Nk^2\sigma^2}{\Omega_{z_0}L^2}\right\rceil,\text{ and }\eta_k^t = \beta_k(t-1) + \frac{LT_k}{k},
		\end{align}
		where $\Omega_{z_0}$ is defined in \eqref{eq:D0}.
		In order to compute an approximate solution $\zu_N$ such that $\sup_{z\in Z}\E\left[Q(\zu_N, z)\right]\le \varepsilon$, the number of evaluations of gradients $\nabla G$ and stochastic operators $\cH$ are bounded by
		\begin{align}
			\label{eq:N_S}
			N_{\nabla G}:=\cO\left(\sqrt{\frac{L\Omega_{z_0}}{\varepsilon}}\right)\text{ and }N_{\cH}:=\cO\left(\frac{M\Omega_{z_0}}{\varepsilon} + \frac{\sigma^2\Omega_{z_0}}{\varepsilon^2} + \sqrt{\frac{L\Omega_{z_0}}{\varepsilon}}\right).
		\end{align}
	\end{corollary}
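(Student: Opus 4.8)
The plan is to mirror the deterministic Corollary~\ref{corQRate}: first confirm that the stated parameters meet every hypothesis of Theorem~\ref{thm:SMPS}, then read off the convergence rate from \eqref{eq:MPS_S}, and finally count the two types of oracle calls. As in the deterministic setting, $\gamma_k = 2/(k+1)$ forces $\Gamma_k = 2/(k(k+1))$, which I will use repeatedly. For \eqref{eq:cond_etaM_S} I would note $\eta_k^t\ge LT_k/k$ and $T_k\ge \sqrt 3\,kM/L$, so $\eta_k^t\ge\sqrt 3\,M$ and $(\beta_k+\eta_k^t)\eta_k^t\ge 3M^2$; condition \eqref{eq:cond_etabeta_S} holds with equality since $\eta_k^t-\eta_k^{t-1}=\beta_k$; and $\gamma_1=1$, $\gamma_k\in[0,1]$, $\beta_k\ge L\gamma_k$ are immediate. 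For the last requirement in \eqref{eq:cond_conv_S}, the key computations are $\beta_k+\eta_k^1/T_k = 3L/k$ and $\beta_k+\eta_k^{T_k}=3LT_k/k$, so that both $\tfrac{\gamma_k}{\Gamma_k}(\beta_k+\eta_k^1/T_k)$ and $\tfrac{\gamma_{k-1}(\beta_{k-1}+\eta_{k-1}^{T_{k-1}})}{\Gamma_{k-1}T_{k-1}}$ collapse to the constant $3L$, giving equality for every $k$.

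Next I would apply Theorem~\ref{thm:SMPS} and bound the two terms of \eqref{eq:MPS_S}. The first term is $2\Gamma_N(\beta_1+\eta_1^1/T_1)\Omega_{z_0} = 12L\Omega_{z_0}/(N(N+1))$, exactly the deterministic estimate. For the variance term the load-bearing identity is $\beta_k+\eta_k^t = L(2t+T_k)/k$, which gives $\sum_{t=1}^{T_k}(\beta_k+\eta_k^t)^{-1} = \tfrac{k}{L}\sum_{t=1}^{T_k}(2t+T_k)^{-1}\le k/L$ because each summand is at most $1/T_k$. Combined with $\gamma_k/(\Gamma_k T_k)=k/T_k$, the $k$-th outer index contributes at most $\tfrac{7\sigma^2 k^2}{2LT_k}$, and this is exactly where the second piece of $T_k$ is designed: $T_k\ge Nk^2\sigma^2/(\Omega_{z_0}L^2)$ forces $k^2/T_k\le \Omega_{z_0}L^2/(N\sigma^2)$, so each contribution is at most $7L\Omega_{z_0}/(2N)$ and the sum over $k=1,\dots,N$ is $7L\Omega_{z_0}/2$. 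Multiplying by $\Gamma_N$ recovers an $\cO(L\Omega_{z_0}/N^2)$ bound, so $\E[\sup_{z\in Z}Q(\zu_N,z)]\le 19L\Omega_{z_0}/(N(N+1))\le 19L\Omega_{z_0}/N^2$; requiring this to be at most $\varepsilon$ yields $N_{\nabla G}=N=\cO(\sqrt{L\Omega_{z_0}/\varepsilon})$, the first bound in \eqref{eq:N_S}.

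For the oracle count, each inner iteration draws two samples of $\cH$ (one in \eqref{eq:SMPSVI:tzkt}, one in \eqref{eq:SMPSVI:zkt}), so the total is $\sum_{k=1}^N 2T_k$. Using $T_k\le \sqrt 3\,kM/L + Nk^2\sigma^2/(\Omega_{z_0}L^2)+1$ together with $\sum_{k=1}^N k=\cO(N^2)$ and $\sum_{k=1}^N k^2=\cO(N^3)$, this is $\cO\!\big(MN^2/L + N^4\sigma^2/(\Omega_{z_0}L^2) + N\big)$. Substituting $N=\cO(\sqrt{L\Omega_{z_0}/\varepsilon})$, hence $N^2=\cO(L\Omega_{z_0}/\varepsilon)$ and $N^4=\cO(L^2\Omega_{z_0}^2/\varepsilon^2)$, produces the three pieces $\cO(M\Omega_{z_0}/\varepsilon)$, $\cO(\sigma^2\Omega_{z_0}/\varepsilon^2)$, and $\cO(\sqrt{L\Omega_{z_0}/\varepsilon})$, which is precisely $N_{\cH}$ in \eqref{eq:N_S}.

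The routine parts are verifying the hypotheses and handling the deterministic term; the genuinely delicate step is the variance sum in Step~2. The whole construction rests on splitting $T_k$ into a bias piece $\propto kM/L$ and a variance piece $\propto Nk^2\sigma^2/(\Omega_{z_0}L^2)$: the former keeps \eqref{eq:cond_etaM_S} satisfied exactly as in the deterministic proof, while the latter is calibrated so that $k^2/T_k\le \Omega_{z_0}L^2/(N\sigma^2)$, making each variance contribution $\cO(L\Omega_{z_0}/N)$ and the full sum decay at the optimal $\cO(L\Omega_{z_0}/N^2)$ rate. The main obstacle is thus checking that this single choice of $T_k$ is simultaneously compatible with all three conditions \eqref{eq:cond_etaM_S}, \eqref{eq:cond_etabeta_S}, and \eqref{eq:cond_conv_S} without degrading the rate, so that the stochastic term inherits the same $\cO(1/N^2)$ decay as the deterministic one.
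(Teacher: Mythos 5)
Your proposal is correct and follows essentially the same route as the paper's proof: verify conditions \eqref{eq:cond_etaM_S}, \eqref{eq:cond_etabeta_S}, \eqref{eq:cond_conv_S}, apply Theorem \ref{thm:SMPS} with $\Gamma_k = 2/(k(k+1))$, bound the variance sum via $k^2/T_k \le \Omega_{z_0}L^2/(N\sigma^2)$ to reach the same $19L\Omega_{z_0}/N^2$ estimate, and count $\sum_k 2T_k$ samples of $\cH$. Your only deviation is cosmetic: you bound $\sum_t (\beta_k+\eta_k^t)^{-1}$ via the exact identity $\beta_k+\eta_k^t = L(2t+T_k)/k$, where the paper simply uses $\beta_k+\eta_k^t \ge \eta_k^t \ge LT_k/k$ termwise, yielding the identical per-iteration contribution $7\sigma^2 k^2/(2LT_k)$.
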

	\begin{proof}
		Clearly,
		\begin{align}
			\eta_k^t \ge \frac{LT_k}{k}\ge \frac{L}{k}\cdot\frac{\sqrt{3}kM}{L} = \sqrt{3}M,
		\end{align}
		so condition \eqref{eq:cond_etaM_S} holds. It is also straightforward to confirm that conditions \eqref{eq:cond_etabeta_S} and \eqref{eq:cond_conv_S} hold. Applying Theorem \ref{thm:SMPS} and substituting our parameter setting \eqref{eq:par_S} (note that applying our choice of $\gamma_k$ to the definition of $\Gamma_k$ in \eqref{eq:Gamma} we have $\Gamma_k=2/(k(k+1))$) we have
		\begin{align}
			\E\left[\sup_{z\in Z}Q(\zu_N, z)\right] 
			\le & 2\Gamma_N\left(\beta_1 + \frac{\eta_1^1}{T_1}\right)\Omega_{z_0} + \Gamma_N\sum_{k=1}^{N}\frac{\gamma_k}{\Gamma_k T_k}\sum_{t=1}^{T_k}\frac{7\sigma^2}{2(\beta_k + \eta_k^t)}
			\\
			\le & \frac{12L \Omega_{z_0}}{N(N+1)} + \frac{2}{N(N+1)}\sum_{k=1}^{N}\frac{k}{T_k}\sum_{t=1}^{T_k}\frac{7\sigma^2 k}{2LT_k}
			\\
			= & \frac{12L \Omega_{z_0}}{N(N+1)} + \frac{1}{N(N+1)}\sum_{k=1}^{N}\frac{7\sigma^2 k^2}{LT_k}
			\\
			\le & \frac{12L\Omega_{z_0}}{N^2} + \frac{1}{N^2}\sum_{k=1}^{N}\frac{7\sigma^2 k^2}{L}\cdot \frac{\Omega_{z_0} L^2}{Nk^2\sigma^2}
			\\
			= & \frac{19L\Omega_{z_0}}{N^2},\ \forall z\in Z.
		\end{align}
		Therefore, to obtain $\sup_{z\in Z}\E[Q(\zu_N, z)]\le \varepsilon$ it suffices to run at most $N_{\nabla G}$ iterations of Algorithm \ref{alg:MPSVI}, in which the gradients $\nabla G$ are evaluated $N_{\nabla G}$ times. Since $2T_k$ evaluations of stochastic operator $\cH$ are performed in the $k$-th iteration of Algorithm \ref{alg:MPSVI}, the total number of stochastic operator evaluations of $\cH$ is bounded by
		\begin{align}
			\sum_{k=1}^{N_{\nabla G}}2T_k \le & 2\sum_{k=1}^{N_{\nabla G}}\left(\frac{\sqrt{3}kM}{L} + \frac{N_{\nabla G}k^2\sigma^2}{\Omega_{z_0}L^2} + 1\right) 
			\\
			\le & \frac{2\sqrt{3}M}{L}\left(N_{\nabla G}\right)^2 + \frac{8\sigma^2}{3\Omega_{z_0} L^2}N_{\nabla G}^4 + 2N_{\nabla G} 
			\\
			= & \cO\left(\frac{M\Omega_{z_0}}{\varepsilon} + \frac{\sigma^2\Omega_{z_0}}{\varepsilon^2} + \sqrt{\frac{L\Omega_{z_0}}{\varepsilon}}\right).
		\end{align}
	\end{proof}
	
	\vgap
	
	From the above corollary, we can observe that the number of gradient evaluations of $\nabla G$ is still in the order of $\cO((L/\varepsilon)^{1/2})$ under the stochastic setting of $\cH$. The number of stochastic sample evaluation of $\cH$ is bounded by $\cO((L/\varepsilon)^{1/2}+ M/\varepsilon + \sigma^2/\varepsilon^2)$. The improvement of gradient complexity from $\cO(1/\varepsilon^2)$ to $\cO((1/\varepsilon)^{1/2})$ is important when $\nabla G$ is the computational bottleneck for solving the variational inequality. It is interesting to observe that there is now a separation of complexity based on two oracles concerning $\nabla G$ and $\cH$. If we assume that there is a deterministic oracle that returns gradient evaluations of $\nabla G$ for any inquiry point, and that there is a stochastic oracle that returns stochastic sample evaluations of $\cH$ for any inquiry point, our result in the above corollary shows that the complexities for the two oracles should be separated and that we are able to achieve the optimal complexity for each oracle. 
	
	\section{Conclusion}
	In this paper we consider a class of structured monotone variational inequalities over compact feasible sets, in which there exist gradient components in the operators of variational inequalities. We study the research question of whether one can develop numerical algorithms in which the number of gradient evaluations is bounded by $\cO(1/\varepsilon^{1/2})$ when computing an $\varepsilon$-approximate solution. We provide a positive answer to the research question by proposing two algorithms that are able to skip the computations of the gradients from time to time, while still maintaining the optimal iteration complexity for solving the variational inequalities problems. For the deterministic case of variational inequality problems, our proposed mirror-prox sliding method is able to compute an $\varepsilon$-approximate solution with at most $\cO((L/\varepsilon)^{1/2})$ gradient evaluations and $\cO((L/\varepsilon)^{1/2}+M/\varepsilon)$ operator evaluations. For the stochastic case, we also proposed a stochastic mirror-prox sliding method that is able to compute an $\varepsilon$-approximate solution in expectation with at most $\cO((L/\varepsilon)^{1/2})$ gradient evaluations and $\cO((L/\varepsilon)^{1/2}+M/\varepsilon + \sigma^2/\varepsilon^2)$ operator sample evaluations. To the best of our knowledge, our complexity results have not yet been obtained in the literature. Our results also reveals that it is possible to obtain a separation of complexity based on two oracles concerning the gradient evaluation and operator/stochastic operator evaluation, while achieving the optimal complexity for each oracle.
	
	\section{Acknowledgment}
	Both authors are partially supported by the Office of Navel Research grant N00014-20-1-2089.
	
	\bibliographystyle{siam_first_initial}
	\bibliography{yuyuan}
	
\end{document}